\def\BibTeX{{\rm B\kern-.05em{\sc i\kern-.025em b}\kern-.08em
    T\kern-.1667em\lower.7ex\hbox{E}\kern-.125emX}}
\newcommand{\ie}{\textit{i.e. }}
\newcommand{\eg}{\textit{e.g. }}
\newcommand{\R}{\mathbb R}
\newcommand{\exit}{\Gamma}% exit time 
\newcommand{\minf}{\Phi}% min phi 
\newcommand{\entrance}{\Psi}% exit time 
\newcommand{\maxf}{\Theta}% max phi 
\newcommand{\Hess}[1]{\operatorname{Hess}{#1}}
\newcommand{\grad}[1]{\nabla{#1}}
\newcommand{\tr}{\operatorname{tr}}
\newcommand{\FwdInv}{F} % forward invariance
\newcommand{\FwdInvExit}{G}
\newcommand{\FwdConv}{Q}
\newcommand{\FwdConvExit}{N}
\newcommand{\lemmaF}{U} % The  solution of Cauchy problem
\newcommand{\lemmaT}{H} % The exit/re-entry time
\newcommand{\ave}{\rho}
\newcommand{\var}{\zeta}
\theoremstyle{plain}
\newtheorem{theorem}{Theorem}
\newtheorem{lemma}{Lemma}
\newtheorem{definition}{Definition}
\newcommand{\mE}{\ensuremath{\mathbf{E}}}
\newcommand{\mP}{\ensuremath{\mathbf{P}}}
\newcommand\gC{{\mathcal{C}}}
\newcommand\gE{{\mathcal{E}}}
\newcommand\gK{{\mathcal{K}}}
\newcommand\gL{{\mathcal{L}}}
\newcommand\gM{{\mathcal{M}}}
\newcommand\gX{{\mathcal{X}}}
\newcommand\gY{{\mathcal{Y}}}
\newcommand{\norm}[1]{\left\lVert#1\right\rVert}
\theoremstyle{plain}% default
\newtheorem*{lemma*}{Lemma}
\newtheorem*{prop*}{Proposition}
\theoremstyle{definition}
\newtheorem*{defn*}{Definition}
\newtheorem*{exmp*}{Example}
\newtheorem*{conj*}{Conjecture}
\theoremstyle{remark}
\newtheorem*{rmk*}{Remark}
\title{\LARGE \bf Safe Control in the Presence of Stochastic Uncertainties}
\author{Albert Chern\(^{1}\), Xiang Wang\(^{2}\), Abhiram Iyer\(^{2}\), Yorie Nakahira\(^{2,*}\)
\thanks{\(^1\)A. Chern is with the Department of Computer Science and Engineering, University of California San Diego, 9500 Gilman Dr, La Jolla, CA 92093, USA, {\tt\small alchern@ucsd.edu}.} 
\thanks{\(^{2}\)X. Wang, A. Iyer and Y. Nakahira are with the Department of Electrical and Computer Engineering Carnegie Mellon Universty, 5000 Forbes Ave, Pittsburgh, PA 15213, USA,
        {\tt\small \{xiangw2,abhirami,ynakahir\}@andrew.cmu.edu}.}
         \thanks{\(^{*}\)To whom correspondence should be addressed.}
}
\begin{document}
\maketitle
\thispagestyle{empty}
\pagestyle{empty}

%%%%%%%%%%%%%%%%%%%%%%%%%%%%%%%%%%%%%%%%%%%%%%%%%%%%%%%%%%%%%%%%%%%%%%%%%%%%%%%%
\begin{abstract}

Accurate quantification of safety is essential for the design of autonomous systems. In this paper, we present a methodology to characterize the exact probabilities associated with invariance and recovery in safe control. We consider a stochastic control system where control barrier functions, gradient-based methods, and barrier certificates are used to constrain control actions and validate safety. We derive the probability distributions of the minimum and maximum barrier function values during any time interval and the first entry and exit times to and from any super level sets of the barrier function. These distributions are characterized by deterministic convection-diffusion equations, and the approach used is generalizable to other safe control methods based on barrier functions. These distributions can be used to characterize various quantities associated with invariance and recovery, such as the safety margin, the probability of entering and recovering from safe and unsafe regions, and the mean and tail distributions of failure and recovery times. 

\end{abstract}

\section{Introduction}

Safe control and verification methods based on barrier functions are widely used in autonomous systems. Examples of these methods include control barrier functions (CBFs)~\cite{ames2019control, clark2019control}, gradient-based modifications~\cite{khatib1986real, gracia2013reactive}, and barrier certificates~\cite{prajna2007framework}. Although significant prior work has been dedicated to exploring these concepts in noiseless or deterministic systems, much less literature has focused on the stochastic setting, largely due to the complexities in dealing with the dynamic evolution of distributions associated with coupled system states, control actions, and barrier function values. For instance, in a deterministic framework, we can design the system state to move toward the interior of the safe set at the boundary or outside of the safe region. Conversely, in a stochastic framework, we need to additionally characterize the dynamics of the (tail) distributions as well as evaluating the minimum and maximum values of the barrier function values over a time interval, which is more demanding than evaluating the function values at a fixed time.

Given the challenges in obtaining exact probability, existing literature has provided insightful upper and lower bounds for safe probabilities \cite{clark2019control,prajna2007framework,yaghoubi2020risk, santoyo2021barrier,cheng2020safe}. Their approaches can be roughly classified into ones based on martingale inequalities and ones that convert a stochastic problem into a deterministic one.\footnote{The two approaches can also be used in concert.} However, these bounds may be loose and thus lead to unnecessarily conservative control actions that compromise performance. 

To bound the probability of safety, the former approach constructs super/sub-martingales that bound the evolution of the barrier function values over time~\cite{clark2019control,prajna2007framework,yaghoubi2020risk, santoyo2021barrier}. In this approach, the constructed martingales do not use the complete distribution of the system dynamics, thus resulting in a loose bound. The latter approach finds the tail probability of system variables being larger or smaller than a threshold, and then considers the worst-case performance when the system variables are bounded by the threshold~\cite{clark2019control,cheng2020safe}. As this approach may not capture the variability in the probability density of the system variables, the resulting bound may also be conservative.  

Such existing bounds for safety probability may be too conservative to guide the risk-control and decision-making processes in real-time. In extreme situations with a highly limited set of admissible control actions, a conservative constraint may be too strong to have a feasible solution. Therefore, accurate quantification for safe probabilities is of critical importance in the design autonomous systems. 

\subsection{Objectives and contributions of this paper}

Motivated by the need for rigorous safe probability quantification, we study the stochastic dynamics and behaviors of the random processes associated with system safety in both space and time. We consider the settings of control barrier functions (CBFs)~\cite{ames2019control, clark2019control}, gradient-based modifications~\cite{khatib1986real, gracia2013reactive}, and barrier certificates~\cite{prajna2007framework}. 

In particular, we list the contributions of this paper below. 
\begin{enumerate}
\item We characterize the exact distributions for the minimum and maximum values of the barrier functions during any given time interval and the first entry and exit times to and from the safe region and any super level sets of the barrier functions.
\item These distributions are given as the solutions to the deterministic convection-diffusion equations. These equations can be solved using standard tools from numerical algorithms. 
\item The analysis framework builds upon the proof techniques from the Feynman-Kac representation (\cite[Theorem 1.3.17]{pham2009continuous} and Lemma~\ref{thm:Invariant Probability}) to derive a few of its variations (Lemma~\ref{lem:Lemma1}, Lemma~\ref{lem:EscapeTime}, and Lemma~\ref{lem:EscapeTimeLemma3}). These variations can be used to derive the distributions of other random processes beyond the settings studied in this paper.
\item When the state initiates from the safe region, these distributions can be used to infer the probability of failure in any finite time horizon, the distribution of the safety margin in normal operation, and the probability of the time to exit from the safe region. 
\item When the state is outside of the safe region, these distributions can be used to infer the probability of recovery in any finite time horizon, the distribution of the distance from the safe region in the recovery process, and the distribution of the recovery time to re-enter the safe region. 
\end{enumerate}

The information on these probabilities can be used to better account for uncertainties in the system model (\eg unmodeled dynamics), control process (\eg sensing and actuation noise), and environment (\eg moving objects). A few examples of these scenarios include when unmodeled dynamics are captured using statistical models such as Gaussian processes~\cite{fan2019,fisac2018general}, when sensing and actuation noise is modeled by multivariate normal distributions~\cite{zhou1996robust}, or when perception and data fusion results come with some error probabilities~\cite{ferguson2008, leon2019}. 

Beyond the settings focused in this paper, an exact characterization of these tail distributions can be used to formulate new optimization problems with probabilistic bounds on safety or recovery, which can be solved using existing techniques from partial differential equations (PDE) constrained optimization. These techniques will guide real-time decision-making and risk-control in many autonomous systems~\cite{koopman2017autonomous,tadele2014safety,moustris2011evolution}.  

%can be used to minimize the probabilities of crashes and accidents in autonomous systems. 

%%%%%%%%%%%%%%%%%%%%%%%%%%%%%%%%%%%%%%%%%%%%%%%%%%%%%%%%%%%%%%%%%%%%%%%%%%%%%%%%
\subsection{Notation}
Let $\R$, $\R_+$, $\R^n$, and $\R^{m\times n}$ be the set of real numbers, the set of non-negative real numbers, the set of $n$-dimensional real vectors, and the set of $m \times n$ real matrices. Let $x[k]$ be the $k$-th element of vector $x$. 
Let $x^+ = \max(x,0)$ and $x \wedge y = \min\{x,y\}$ for \(x, y \in \R\). Let $f:\gX \rightarrow \gY$ represent that $f$ is a mapping from space $\gX$ to space $\gY$. 
Let \(\mathbb{1}_{\gE}\) be an indicator function, which takes \(1\) when condition \(\gE\) holds and \(0\) otherwise. %Let \(\mP(\gE)\) be the probability of an event \(\gE\).
Let \(\mP_x(\gE) = \mP(\gE(X)|X_0=x)\) denote the probability of event \(\gE(X)\) involving a stochastic process \( X = \{X_t\}_{t\in\R_+}\) conditioned on
\(X_0=x\). Let \(\mE_x [ F ( X ) ] = \mE[ F ( X ) | X_0 = x ]\) denote the expectation of $F ( X )$ (a functional of $X$) conditioned on $X_0 = x$. We use upper-case letters (\eg $X$) to denote random variables and lower-case letters (e.g. $x$) to denote their specific realizations.

\section{Problem Statement}
We are interested in identifying the exact probability that a system state enters or leaves the safe region in a given time interval as well as the distributions of quantities associated with regular operation (\eg safety margin and failure time) and recovery process (distance from the safe set and speed of recovery). To do so, we describe the system dynamics and the related safety properties in subsection~\ref{sec:SystemDescription}. Then, we will introduce the safe control methods to be analyzed in subsection~\ref{sec:SafeControl}.

\subsection{System Description and design specifications} 
\label{sec:SystemDescription}
%We consider a filtered probability space $(\Sigma, (\mathcal F_t)_{t \in \R_+} , P)$, a $k$-dimensional Brownian motion $W_t$ starting from $W_0 = 0$, and the $\sigma$-algebra of events generated by the Brownian motion $\mathcal F_t := \sigma\{ W(s) : 0 \leq s \leq t \}$. 
We consider a control system with stochastic noise of $k$-dimensional Brownian motion $W_t$ starting from $W_0 = 0$. The state of the control system, $X_t \in \R^{n}, t \in \R_+$, is the strong solution of the following stochastic differential equation (SDE)
\begin{equation}\label{eq:x_trajectory}
    dX_t = (f(X_t) + g(X_t)U_t)dt + \sigma(X_t)dW_t,
\end{equation}
where $U_t \in \R^{m}$ is the control input.  Throughout this paper, we assume sufficient regularity in the coefficients of the SDE~\eqref{eq:x_trajectory}. That is, $f, g, (U_t)_{t \in \R_+}$ are chosen in a way such that there exist a unique strong solution to \eqref{eq:x_trajectory}.\footnote{ See~\cite[Chapter~5]{moustris2011evolution},~\cite[Chapter~1]{oksendal_stochastic_2003a}, \cite[Chapter II.7]{borodin_stochastic_2017} and references therein for required conditions.\label{ft:FN-Solution}}
%\footnote{A sufficient condition is that $f: \R^{n}\rightarrow \R^{n}$ $g: \R^{n}\rightarrow \R^{n\times m}$, and $\sigma: \R^{n}\rightarrow \R^{n\times k}$ being globally Lipschitz continuous. } 
%\albert{is globally Lipschitz continuous not enough for 'sufficient regularity'? the statements has overlaps, and also seems to overlap with footnote 2: A) Globally liptschitz is enough.  But maybe it is easier to just outsource the condition to the footnote.  Saying words like globally Lipschitz and so on is not really the tone for this paper.}
The state of the system $X_t$ can contain both the state of the plant, which can be controlled, and the state of environmental variables (\eg moving obstacles), which cannot be controlled.\footnote{Let \(X_p\) be the state of the plant to be controlled and \(X_o\) be the environmental variables which cannot be controlled. The dynamics of \(X_p\) and \(X_o\) can be jointly captured by a system with the augmented state space $X^\intercal = [X^\intercal_p, X_o^\intercal]$.\label{fn:augument_x}} The size of $\sigma(X_t)$ is determined from the uncertainties in the disturbance, unmodeled dynamics~\cite{cheng2020safe,dhiman2020control,fan2019,fisac2018general}, and the prediction errors of the environmental variables~\cite{lefevre2014survey,ferguson2008}. Examples of these cases include when the unmodeled dynamics are captured using statistical models such as Gaussian Processes\footnote{In such settings, the value of $\sigma(X_t)$ can be determined from the output of the Gaussian Process as in~\cite{cheng2020safe,dhiman2020control,fan2019,fisac2018general}} and when the motion of the environment variables are estimated using physics-based models such as Kalman filters~\cite{lefevre2014survey,ferguson2008}.

The control signal is generated by a memoryless state feedback controller of the form 
\begin{align} 
U_t = K ( X_t ).
\end{align} 
Examples of controllers of this form include classic controllers (\eg PID~\cite{franklin2002feedback}), optimization-based controllers (\eg some robust controllers~\cite{zhou1996robust} and nonlinear controllers~\cite{khali1996adaptive}), safe control methods~\cite{clark2019control,prajna2007framework,yaghoubi2020risk, santoyo2021barrier,cheng2020safe,agrawal2017discrete}, and some hybrid of those. 

One of the major design considerations for $K$ is system safety. This in turn requires a rigorous analysis on the following two aspects related to safety: forward invariance and recovery. The forward invariance refers to the property that the system state always stays in the safe region, while recovery refers to the property that the system state recovers to the safe region even if it originates from the unsafe region.
\begin{definition}[Safe Set~\cite{ames2019control}]\label{def:safe_region}
The safe region is defined using a set $\gC$ that is characterized by a super level set of some function $\phi(x)$, \ie
\begin{equation}\label{eq:safe_region}
\mathcal{C} =\left\{x \in \mathbb{R}^{n}: \phi(x) \geq 0\right\},
\end{equation}
We call $\gC$ the \textit{safe set}, and \(\phi(x)\) the barrier function.
%\albert{We can maybe add a footnote ``We assume \(0\) is a regular value of \(\phi\).''  That is, the gradient of \(\phi\) does not vanish whenever \(\phi = 0\); otherwise there can be ugly level set, and the implicit function theorem breaks. It's just a mathematical subtlety and I guess it is not a problem in reality.}
Additionally, we denote the boundary of the safe set and unsafe set by
\begin{align}
    \partial \gC &= \{x\in\R^n:\phi(x)=0\}, \\    \gC^c & =\{x\in\R^{n}: \phi(x)<0\}.
\end{align}
We assume that $\phi(x): \mathbb{R}^n \rightarrow \mathbb{R}$ is a second-order differentiable function whose gradient does not vanish at $\partial \gC$.
\end{definition}
We assume that the dynamics and uncertainty in the control system and the environment (\eg uncertainties in the environment and moving obstacles) are all captured in~\eqref{eq:x_trajectory}\footref{fn:augument_x} so that the safe set can be defined using a static function $\phi(x)$ instead of a time-varying one.
\subsubsection{Forward invariance} When the initial state starts from a safe region, \ie $X_0 = x\in\gC$, we study the quantities associated with forward invariance: the safety margin from $\partial \gC$ and the first exit time from $\gC$.

\begin{definition}[Forward Invariant Set~\cite{khali1996adaptive}]
A set $\gM$ is said to be a forward invariant set\footnote{A forward invariant set is also known as positive invariant set in the existing literature~\cite{khali1996adaptive}.} with respect to system \eqref{eq:x_trajectory} if 
\begin{align}
    X_0 \in \gM \Rightarrow X_t \in \gM,\quad  \forall t \geq 0.
\end{align}
%~\cite{agrawal2017discrete,ames2016control}. 
\end{definition}
In other words, if the state belongs to a forward invariant set $\gM$ with respect to \eqref{eq:x_trajectory} at some time, then it remains in $\gM$ for all future times. %\albert{Check whether ``times'' should be plural. Abhi, pls help} 

We study the forward invariance of the safe set through characterizing the distributions of the following two random variables:
\begin{align}
\label{eq:min_phi}
   \minf_x(T) &:= \inf\{ \phi(X_{t}) \in \R : t \in [0,T] , X_0 = x  \},\\
\label{eq:first_exit_time}
   \exit_x(\ell)  &:= \inf\{t \in \R_+ :  \phi(X_t)\leq \ell , X_0 = x \},
\end{align}

\noindent where $x \in \gC$. The value of $\minf_x(T)$ informs the worst-case safety margin from the boundary $\partial \gC$ during $[0, T]$, while the value of $\exit_x(0)$ is the first exit time from the safe set.

%Charactering \eqref{eq:safe_prob_invariant} requires computing the distribution of the random variable $\inf\limits_{t \in [0,T]}  \phi\left(X_{t}\right).$ The latter requires us to characterize the distribution of 

\subsubsection{Recovery} When the initial state is outside of the safe region, \ie$X_0 = x \notin \gC$, we study the quantities associated with recovery: the distance from $\gC$ and the recovery time. 

Properties associated with entrance to a set have been studied from the perspective of stochastic stability in different contexts.\footnote{Stochastic stability is defined by, given $\gM$, the existence of a set $\gM_0$ such that
$x_0 \in \gM_0 \Rightarrow x_t \in \gM , \forall t \geq 0$ with probability one~\cite[Chapter~2]{kushner1967stochastic}. Stochastic stability requires invariance with probability one, and the invariance only need to hold for some set of (not all) initial conditions.} However, stochastic stability is not applicable in our setting for two reasons: the state rarely stays in a set indefinitely after reaching it; and the initial state is often predetermined and cannot be chosen.\footnote{From Theorem \ref{thm:InvariantProbability_MainTheorem1}, it can be shown that $\mP ( \maxf_x(T) \geq 0 ) \rightarrow 0$ as $T\rightarrow \infty$ in many cases.}
Instead, we consider a weaker notion than stability, formally defined below, which requires re-entry for any initial condition but does not require invariance upon re-entry. 

\begin{definition}[Forward Convergent Set]
A set $\gM$ is said to be a forward convergent set with respect to system~\eqref{eq:x_trajectory} if 
\begin{align}\label{eq:ProblemStatement_ConvergenceSet}
    X_{0} \notin \gM  \Rightarrow \exists \tau  \in (0, \infty) \text{ s.t. } X_{\tau} \in \gM .
\end{align}
\end{definition}
That is, for any initial state that does not belong to the safe set, the state re-enters the safe set in finite time. Equivalently, even if the state does not belong to a forward convergent set $\gM$ with respect to \eqref{eq:x_trajectory} at some time, it will enter $\gM$ in finite time.\footnote{This equivalence holds when the system parameters $f, g, \sigma$, and control policy $K$ are time-invariant mappings.} Note that a forward convergent set does not require the state to stay in the set indefinitely after re-entry. 

We study forward convergence to the safe set through characterizing the distribution of the following two random variables:
\begin{align}
\label{eq:max_phi}
   \maxf_x(T) & := \sup\{ \phi(X_{t}) \in \R : t \in [0,T] , X_0 = x  \},\\
\label{eq:first_entry_time}
   \entrance_x(\ell) & := \inf\{t \in \R_+ :  \phi(X_t)\geq \ell , X_0 = x \},
\end{align}
\noindent where $x \notin \gC$. The value of $\maxf_x(T)$ informs the distance\footnote{
Here, we refer to distance with a slightly light abuse of notation as the value of \(\maxf_x(\ell),\minf_x(\ell)\) measure the value of function $\phi$, instead of the distance between a state \(x\) to the safe set \(\gC\). Note that the following condition holds: 
\[
\phi(x)\geq 0 \Leftrightarrow Dist(x,\gC):= \inf_{y\in\gC} \norm{x-y}=0
\]\label{ft:distance_meansure}}
to the safe set, while the recovery time $\entrance_x(0)$ is the duration required for the state to re-enter the safe region for the first time.

The probability of staying within the safe set $\gC$ during a time interval $[0 , T]$ given $X_0 = x \in \gC$ can be computed using the distributions of \eqref{eq:min_phi} or \eqref{eq:first_exit_time} as
\begin{align} 
\label{eq:safe_prob_invariant}
\begin{split}
    \mP_x(X_t\in\gC, \forall t\in[0,T]) &= \mP \left(\minf_x(T) \geq 0 \right) \\
    &= 1 - \mP \left(\exit_x(0) \leq  T \right).
\end{split}
\end{align}
Similarly, the probability of having re-entered the safe set $\gC$ by time $T$ given $X_0 = x \notin \gC$ can be computed using the distributions of \eqref{eq:max_phi} or \eqref{eq:first_entry_time} as  
\begin{align} 
\label{eq:safe_prob_convergent}
\begin{split}
 \mP_x\left( \exists t \in [0, T] \text{ s.t. } X_t \in\gC \right) &= \mP \left( \maxf_x(T) \geq 0 \right) \\
    &= 1 - \mP \left( \entrance_x(0) \leq  T \right).
\end{split}
\end{align}
More generally, the distributions of $\minf_x(T)$, $\maxf_x(T)$, \(\Gamma_x(\ell)\), and $\entrance_x(\ell)$ contain much richer information than the probability of forward invariance or recovery. Thus, knowing their exact distributions allows uncertainty and risk to be rigorously quantified in autonomous control systems.

% The property of recovery can be characterized by the probability of re-entry, \begin{align} 
% \label{eq:safe_prob_convergence}
% \mP_x\left( \exists t \in [0, T] \text{ s.t. } X_t \in\gC \right) % = \mP_x(\phi(X_t)\geq 0),
% \end{align} 
% %Again, characterizing~\eqref{eq:safe_prob_convergence} in turn requires computing the distribution of $\underset{t\in[0,T]}{\max}\phi\left(X_{t}\right)$. 
% and the distribution of the first re-entry time, 
% \begin{equation}\label{eq:first_re-entry_time}
%     H_{\gC} (x) = \inf\{t\in \R_+\mid X_t\in\gC, X_0=x \}.
% \end{equation}

\subsection{Safe control methods to be analyzed}
\label{sec:SafeControl}

The safety of the system can be evaluated using \(\phi(X_t)\), whose dynamics are captured in the expectation and higher-order moments of \(d\phi(X_t)\). Most existing literature decides control actions and/or verify safety using this expectation, which are characterized using the infinitesimal generator.% and Dynkin's formula~\cite{oksendal_stochastic_2003a}. 

%Most existing literature uses the former to decide control actions, which are characterized using infinitesimal generator and Dynkin's formula~\cite{oksendal_stochastic_2003a}. 

\begin{definition}[Infinitesimal Generator]
The infinitesimal generator $A$ of
a stochastic process $\{ Y_t  \in \R^n \}_{t \in \R_+}$ is%\footnote{The infinitesimal generator can be interpreted as the stochastic counterpart of Lie derivative.}
\begin{equation}\label{eq:InfinitesimalGenerator}
\begin{split}
AF\left(y\right) & = \lim _{h\rightarrow 0} \frac{\mE_y\left[F(Y_{h})\right]-F\left(y\right)}{h}\\
   % & = \gL_\mu f + {1\over 2}\tr(\sigma^\intercal\sigma\Hess{f})
\end{split}
\end{equation}
whose domain is the set of all functions $F: \R^n \rightarrow \R$ such that the limit of \eqref{eq:InfinitesimalGenerator} exists for all $y \in \R^n$. 
\end{definition}

Applying the infinitesimal generator to $\phi$ yields\footnote{This formula is computed using the It\^o's Lemma, stated below: Given a $n$-dimensional real valued diffusion process and any twice differentiable scalar function $f: \R^n \rightarrow \R$, It\^o's lemma states that
$
df= \left(\gL_\mu f + \frac{1}{2}\tr\left(\sigma\sigma^\intercal \Hess{f}\right)\right)\, dt + \grad{f}\cdot\sigma\,dW_{t},
$
where $\gL_\mu{f}(x) = \mu(x)\cdot\grad{f}(x)$ is the Lie derivative of \(f(x)\) along the vector field \(\mu(x)\)~\cite{karatzas2014brownian}.} 
\begin{align}
\label{eq:inf_gen_phi1}
       A\phi & = \gL_f\phi + \gL_{g}\phi u + \frac{1}{2}\tr\left(\sigma\sigma^\intercal\Hess{\phi}\right)\\
        & =: D_\phi(x,u)
\end{align}
We define $D_\phi(x, u)$ to be the value of \eqref{eq:inf_gen_phi1} evaluated at $x$ with a control input $u$. 
Here, $\gL_v$ is the Lie derivative along a vector or matrix field $v$.\footnote{The Lie derivative \((\gL_g\phi)\) along a matrix field \(g\) is interpreted as a row vector such that \((\gL_g\phi)u = (gu)\cdot\nabla\phi\).} The infinitesimal generator can be interpreted as the stochastic counterpart of the Lie derivative: It gives the derivative along the flow of $dX_t$ in expectation, \ie
\begin{align}
\mE[ d\phi(X_t)] = A\phi(X_t)dt .    
\end{align}
%\albert{I don't think the following equation is correct.  (The correct one should be \(\mE[d\phi(X_t)] = A\phi(X_t)\, dt\) with a \(dt\) in the end.)  Another option is to just keep it vague and make a full stop right before this ``i.e.''} 

A desirable property for forward invariance is  $A\phi(X_t)$ being non-negative when $X_t \in \gC$ is close to the boundary $\partial \gC$ (Property 1). A desirable property for fast recovery is $A\phi(X_t)$ being positive when $X_t$ is outside of $\gC$ (Property 2). This property has motivated many safe control algorithms. We elaborate some of these algorithms to which our analysis can be applied.\footnote{Notice that the control policies discussed below are not mutually exclusive. That is, the control policies from Section \ref{sec:zero_CBF}, for some parameter choice, may be identical to those from Section \ref{sec:Gradient}.} 

\subsubsection{Zero control barrier function (zero-CBF)} 
\label{sec:zero_CBF}
%\albert{Should it be ``zero-CBF'' in the parentheses?}
Algorithms based on zero-CBF ensures that the control action satisfies
\begin{equation}\label{eq:zero_CBF_stochastic}
    A\phi(X_t) = D_\phi(X_t,U_t) \geq -\alpha(\phi(X_t))
\end{equation}
for all times \(t\) (see \cite{ames2019control,clark2019control,fan2019} and references therein). Here, $\alpha$ is chosen so that the desirable properties 1 and 2 holds: $A\phi(X_t)$ is non-negative at $X_t \in \partial \gC$ and strictly positive when $X_t \notin \gC$. For example, the use of a class $\mathcal K$ function\footnote{A continuous function \(\alpha: [0,a)\rightarrow[0,\infty]\) is defined as a class $\gK$ function if \(\alpha\) is strictly increasing and \(\alpha(0)=0\).} for $\alpha$ will yield these two properties. Although the use of class \(\gK\) function for \(\alpha\) is desirable, the analysis of this paper do not require this condition.

Condition~\eqref{eq:zero_CBF_stochastic} is often combined with a nominal controller $U^n_t = N(X_t)$ as follows: set \(U_t=U^n_t=N(X_t)\) if \(U^n_t\) satisfies \eqref{eq:zero_CBF_stochastic}, otherwise modify the control signal $U_t$ to satisfy~\eqref{eq:zero_CBF_stochastic}. Although in the latter case there may exist multiple \(U_t\) that satisfy~\eqref{eq:zero_CBF_stochastic}, we assume that these \(U_t\) are chosen based on a fixed policy \(R(X_t,N(X_t))\) that produce unique output \(U_t\) satisfying~\eqref{eq:zero_CBF_stochastic}. So, the overall function is representable by a function of the form 
\begin{align}
\label{eq:ControlSignal_Ha}
K(x)=\begin{cases}
 N(x), &   D_\phi(x, N(x)) \geq -\alpha(\phi(x)),   \\
 R(x,N(x)), & \text{Otherwise}.
\end{cases}
\end{align}
The problem of finding an alternative safe control action is often posed as a static optimization problem, and their analytic solution $R(x,N(x))$ may be derived using KKT conditions.\footnote{Such problems are not an optimal control problem, which requires consideration into future costs, and thus solving such problems are often easier. For example, if the point in the set $\{u \in \R^m :  D_\phi(X_t,u) \geq -\alpha( \phi(X_t))\}$ that are closet to the signal from the nominal controller $N(X_t)$ is chosen, such as in~\cite{ames_control_2019} and references therein, the closed form solution of \eqref{eq:ControlSignal_Ha} is given in~\cite{wei2019safe}.}

\subsubsection{Gradient-based approach} 
\label{sec:Gradient}
%\todoalbert{what is a good name for this approach? it's more like gradient along a vector field. any good math terminology?}\albert{I think gradient is spot on enough. It is \(g\cdot \nabla\phi\) after all (\(g\) is a matrix), which is a linear transform of the gradient. You can view that as the gradient with respect to a different inner product structure.}
Algorithms based on gradient\footnote{Precisely speaking, it is based on gradient $\nabla\phi$ with respect to some inner product structures, \eg \(\gL_g \phi(x) = g\cdot \nabla\phi\).} use control signals of the following form
\begin{equation}\label{eq:gradient}
   K (x ) = N(x) +  c( x ) (\gL_g \phi(x))^\intercal,
\end{equation}
where $c: \R^n\to\R_+$ is a non-negative function~(see \cite{gracia2013reactive,khatib1986real} and references therein). %\todoalbert{(20) is lower case $x$. I'm not sure about all $x$ that comes after. it seems the $x$ below refers to $X_t$ in the trajectory. So we could also consider those as a stochastic random variables.}\albert{\(x\) is just a point in the state space.  \(K\) is a function of state, and thus it make sense to write \(K(x)=\cdots\). Same philosophy applies below.  The infinitesimal generator \(A\) is an operator applied on functions defined over the state space, and it involves directional derivative along the vector/matrix fields \(f,g\) etc.  The statement does not depend on a particular sample path \(X_t\).}
The first term is some nominal control signal, and the second term pushes $d\phi$ towards the positive direction for the purpose of improved safety. This can be observed from 
\begin{align*}
     A\phi  = \gL_f \phi + \gL_g \phi N
      + c \gL_g \phi (\gL_g\phi)^\intercal +{1\over 2}\tr\left(\sigma\sigma^\intercal\Hess{\phi}\right) 
\end{align*}
where the term $c \gL_g \phi(\gL_g\phi)^\intercal$ takes non-negative values. The values of \(c(x)\) are used to manipulate the aggressiveness of safe control. That is, \(c(x)\) is set to \(A\phi(x)\geq 0\) as the system state \(x\) approaches the boundary $\partial \gC$ of the safe set from inside, and is strictly positive when \(x\) is outside of the safe set \(\gC\).

\subsubsection{Safety Verification}
\label{sec:SafetyVerification}
Instead of generating the control signals, safety verification is also an application area of our methods (see~\cite{prajna2007framework} and references therein). The safety verification methods consider verifying the safety in stochastic system~\eqref{eq:x_trajectory} with $U_t \equiv 0$. Our methods can also be used in a special case of safety verification using barrier certificate by modifying how the safe set is defined with respect to the barrier function $\phi$ and the definitions of~\eqref{eq:min_phi}, \eqref{eq:first_exit_time}, \eqref{eq:max_phi} and~\eqref{eq:first_entry_time}.\footnote{Barrier certificate considers the case when the initial state originates from a set~\cite{prajna2007framework}, while we consider that the original state is known. It is possible to extend the results when the initial state is generated from a distribution.}

%Another algorithm is to steer the system into a negative direction in the vector field of $phi(x_t)$, \ie 

% The safety condition in~\eqref{eq:deterministic_safety_condition} can be applied
% to the stochastic system~\cite{clark2020control} to yield
% \begin{equation}\label{eq:zero_CBF_stochastic}
%     A\phi(X_t)\geq -\alpha(\phi(X_t)), \quad \forall t\in[0,T].
% \end{equation}

% In order to make sure a control action can satisfy conditions \tocite{}, we impose the following assumptions.  
% \begin{assumption}\label{con:controllable}
% In system~\eqref{eq:x_trajectory}\footnote{The infinitesimal generator $A$ defined as~\eqref{eq:InfinitesimalGenerator}}, for any state vectors $x$ and $v$, there is a control signal $u$ to produce $A\phi(x,u)=v$, in which $A\phi(x,u)$ is defined as 
%  \begin{equation}\label{eq:x_u_phi}
% \begin{split}
%     A\phi(x,u) &= \frac{\partial \phi}{\partial x}(x)(f(x)+g(x) u)\\
%     & +\frac{1}{2}\tr\left(\sigma^\intercal(x)\sigma(x) \Hess{\phi}(x) \right).
% \end{split}
% \end{equation}
% \end{assumption}
% Assumption~\ref{con:controllable} determines that there must exist a control signal $u_t$ such that
% \begin{equation}\label{eq:CBF_constraint}
%     A\phi(x_t,u_t) \geq -\alpha(\phi(x_t,u_t)),
% \end{equation} along the whole trajectory $\{x_t\}_{t\in \R_+}$. 

\subsection{Objective and scope of this paper}

In this paper, we build upon analysis tools from stochastic processes to answer the following question:

\begin{displayquote}
%\textbf{Questions addressed in this paper:} 
In system~\eqref{eq:x_trajectory} with the settings described in~\ref{sec:zero_CBF}, ~\ref{sec:Gradient}, and~\ref{sec:SafetyVerification}, what are the \textit{exact distributions} of~\eqref{eq:min_phi}, \eqref{eq:first_exit_time}, \eqref{eq:max_phi}, and~\eqref{eq:first_entry_time}?
\end{displayquote}

% \medskip\noindent
% %\textbf{Problem statement of this paper:}
% \textit{In system~\eqref{eq:x_trajectory} with the settings described in~\ref{sec:zero_CBF}, ~\ref{sec:Gradient} and~\ref{sec:SafetyVerification}, what is the exact distribution of~\eqref{eq:min_phi}, \eqref{eq:first_exit_time}, \eqref{eq:max_phi} and~\eqref{eq:first_entry_time}?}
% \medskip

The distribution of \eqref{eq:min_phi} and~\eqref{eq:first_exit_time} will allow us to study the property of invariance from a variety of perspectives, such as the average failure time, the tail distributions of safety or loss of safety, and the mean and tail of the safety margin.
The distribution of \eqref{eq:max_phi} and~\eqref{eq:first_entry_time} will allow us to study the property of recovery from a variety of perspectives, such as the average recovery time, tail distribution of recovery, and the mean and tail distribution for recovery vs. crashes. 
Although this paper focuses on the settings described in~\ref{sec:SafeControl}.1-3, our analysis methods can be generalized to controllers of other forms.

\section{Safe Probability}
\label{sec:safety_thm}
In this section, we study the properties of forward invariance and safety. The former is through characterizing the distribution of the safety margin~\eqref{eq:min_phi} and first exit time \eqref{eq:first_exit_time} in Section~\ref{sec:complete_information_distribution}. The latter is through characterizing the distribution of the distance to the safe set \eqref{eq:max_phi} and the recovery time \eqref{eq:first_entry_time} in Section~\ref{sec:ForwardConvergence}. The results of Section~\ref{sec:complete_information_distribution} and Section~\ref{sec:ForwardConvergence} are proved in Section~\ref{sec:Proof_theorem1_3} and Section~\ref{sec:Proof_theorem2_4}.

%Section~\ref{sec:complete_information_distribution} presents the probability distribution of the system state exiting the safe region $\gC$ during the time interval $t\in[0,T]$ and the distribution of the exit time $H_{\gC}$  with condition~\eqref{eq:zero_CBF_stochastic} and $X_0=x\in\gC$. Section~\ref{sec:ForwardConvergence} gives the re-enter probability during the time interval $t\in[0,T]$ and the distribution of the re-enter time $H_{\gC}$ with safe constraint~\eqref{eq:zero_CBF_stochastic} and $X_0=x\notin\gC$. Section~\ref{sec:Proof_theorem1_3} and~\ref{sec:Proof_theorem2_4} are the corresponding proofs.  

%\todo{move to section V:Section~\ref{sec:ForwardInvariance_class_K} and~\ref{sec:CaseStudies_InfiniteTime} provide two special cases of dynamic systems and prove that the system state $X_t$ will enter the unsafe region with probability 1 when $T\rightarrow\infty$ even the safe constraint~\eqref{eq:zero_CBF_stochastic} is satisfied. Section~\ref{sec:Case_Forward_Convergence} provides a special case that the system will re-enter the safe region $\gC$ with unsafe initial point $X_0=x\notin\gC$.} 

The random variables of our interests, \eqref{eq:min_phi}, \eqref{eq:first_exit_time}, \eqref{eq:max_phi} and~\eqref{eq:first_entry_time},  are functions of $\phi(X_t)$. Since the stochastic dynamics of $d\phi(X_t)$ are driven by $X_t$, we consider the augmented state space
\begin{align}\label{eq:augumented_z}
        Z_t := \begin{bmatrix}\phi(X_t) \\ X_t
        \end{bmatrix}\in\R^{n+1}.
\end{align}
From It\^o's lemma, dynamics of $\phi(X_t)$ satisfies
\begin{equation}\label{eq:ito_lemma2}
\begin{split}
    d\phi(X_t)  = A\phi(X_t)dt + \gL_{\sigma}\phi(X_t)dW_t.
\end{split}
\end{equation}
Combining \eqref{eq:x_trajectory} and~\eqref{eq:ito_lemma2}, the dynamics of $Z_t$ are given by the SDE 
\begin{align}\label{eq:MainTheorem_z}
    dZ_t &= \ave(Z_t)\,dt + \var(Z_t)\,dW_t,
\end{align}
where the drift and diffusion parameters are
\begin{align}\label{eq:mu_sigma_prime}
 & \ave(z) = \begin{bmatrix}
 D_{\phi}(x,K(x))\\
 (f(x) + g(x)K(x))
\end{bmatrix}, && 
    \var(z) = \begin{bmatrix}
 \gL_\sigma\phi(x)\\
 \sigma(x)
\end{bmatrix}.
\end{align}
The mapping $K$ is either \eqref{eq:ControlSignal_Ha} or \eqref{eq:gradient}, depending on the choice of control policies.
In the next section, we present out main theorems. Although we focus on the settings of~\ref{sec:SafeControl}.1-3, the analysis framework we present can be applied to any closed-loop control systems whose augmented state space $Z$ in~\eqref{eq:augumented_z} is the solution to the SDE~\eqref{eq:MainTheorem_z} with appropriate regularity conditions.\(^{\ref{ft:FN-Solution}}\)%\back{after overlap in ft 2 is resolved, come back here to cite appropriate footnote or remark for regularity citations.}

\subsection{Probability of Forward Invariance}\label{sec:complete_information_distribution}

When the state initiates inside the safe set, the distribution of the safety margin, $\minf_x(T)$ in \eqref{eq:min_phi}, is given below.

\begin{theorem}\label{thm:InvariantProbability_MainTheorem1}
Consider system~\eqref{eq:x_trajectory} equipped with the control policy~\eqref{eq:ControlSignal_Ha} or~\eqref{eq:gradient} with the initial state \(X_0 = x\). %Under the augmented stochastic process \eqref{eq:MainTheorem_z} with the given initial state \(X_0 = x\in\gC\), \(Z_0 = z = [\phi(x), x^\intercal]^\intercal\in\R^{n+1}\), 
Let $z = [\phi(x), x^\intercal]^\intercal\in\R^{n+1}$.
Then, the complementary cumulative distribution function (CCDF) of \(\minf_x(T)\),\footnote{We use CDFs and CCDFs with a slight abuse of notation. The presence or absence of equality in the inequality conditions are chosen so that the obtained probability can be used to compute \eqref{eq:safe_prob_invariant} and \eqref{eq:safe_prob_convergent}. Also, note that if a random variable $Y$ has a PDF, then $\mP(Y\leq \ell) = \mP(Y< \ell)$, so the presence or absence of equality in the probability density function does not affect the computed probabilities.\label{ft:inequality}} 
\addtocounter{equation}{1}
    \begin{align}\tag{\theequation.A}
        \FwdInv(z,T;\ell)=\mP(\Phi_x(T)\geq \ell),\quad\ell\in\R,
    \end{align}
  %  \todoalbert{CCDF has no equality in the inequality?}
    is the solution to the initial-boundary-value problem of the convection-diffusion equation on the super-level set \(\{z\in\R^n: z[1]\geq \ell\}\)
    \begin{align}\tag{\theequation.B}
    \label{eq:CauchyProblem}
        \begin{cases}
         {\partial \FwdInv\over\partial T} = {1\over 2}\grad\cdot(D\grad \FwdInv) + \gL_{\rho - {1\over 2}\grad\cdot D}\FwdInv,& z[1]\geq \ell, T>0,\\
         \FwdInv(z,T;\ell) = 0,& z[1]< \ell, T>0,\\
         \FwdInv(z,0;\ell) = \mathbb{1}_{\{z[1]\geq\ell\}}(z),&z\in\R^{n+1},
        \end{cases}
    \end{align}
    where \(D:=\zeta\zeta^\intercal\).\footnote{Here, the spatial derivatives \(\grad{}\) and \(\gL\) apply to the variable \(z\in\R^{n+1}\).\label{ft:spatial-derivative}}
    %\todoalbert{this is a notation thing. do you think we need to have separate $T$ in thm 1 and $t$ in thm 2? In the new formulation, both of them have $t, T$ and $\ell$. maybe using the same $t$ looks more elegant? as T and t are indeed similar, if we view 12 or 13. what do you think?}\albert{Note that \(T\) really represent the size for the time interval \([0,T]\), whereas \(t\) is mostly used for the dynamic variable in \(X_t\).  So I feel like what we have written here is clearer.}
   % \todoalbert{$\partial \FwdInv\over\partial T$ should be $\partial \FwdInv\over\partial t$? }
    %\todoalbert{which is better $\FwdInv(z,T;\ell)=\mP(\Phi_x(T)\geq \ell)$ or $\mP(\Phi_x(T)\geq \ell)=\FwdInv(z,T;\ell)$? }\albert{F is in the front.}
%\todoalbert{Should the condition be \(\FwdInv(z,t)\to 0,  \norm{z}\to \infty\) or \(\FwdInv(z,t)\to z, \norm{z}\to \infty\)? current one is good.}
%\todo{3rd and 4th line can be compressed into $F(z,0) = \mathbb{1}_{z[1]>\ell}.$}
\end{theorem}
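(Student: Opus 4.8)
The plan is to read $F(z,T;\ell)$ as the survival probability of the augmented diffusion $Z_t$ of~\eqref{eq:MainTheorem_z} before it leaves the closed super-level set $\gS_\ell:=\{z\in\R^{n+1}:z[1]\ge\ell\}$, and then to invoke a Feynman--Kac representation. Since a continuous path attains its infimum on the compact interval $[0,T]$, we have
\[
\{\minf_x(T)\ge\ell\}=\{\phi(X_t)\ge\ell\ \text{for all }t\in[0,T]\}=\{\tau_\ell>T\},
\]
where $\tau_\ell:=\inf\{t\ge0:\phi(X_t)<\ell\}$ is the first exit time of $Z_t$ from $\gS_\ell$ (this agrees with $\exit_x(\ell)$ up to the probability-zero event that a path touches level $\ell$ without crossing it; cf.\ Footnote~\ref{ft:inequality}). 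As the coefficients of~\eqref{eq:x_trajectory} and the feedback $K$ are time-invariant, $X_t$ — and hence $Z_t$, which determines $X_t$ through its last $n$ coordinates — is a time-homogeneous strong Markov process, so
\[
F(z,T;\ell)=\mP_z(\tau_\ell>T)=\mE_z\!\left[\mathbb{1}_{\gS_\ell}(Z_T)\,\mathbb{1}_{\{\tau_\ell>T\}}\right],
\]
i.e.\ the expected terminal payoff $\mathbb{1}_{\gS_\ell}$ carried by paths that are killed upon exiting $\gS_\ell$.

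Next I would verify that the spatial operator on the right of \eqref{eq:CauchyProblem} is exactly the infinitesimal generator $A$ of $Z_t$. Expanding the divergence by the product rule, $\tfrac12\nabla\cdot(D\nabla F)=\tfrac12(\nabla\cdot D)\cdot\nabla F+\tfrac12\tr(D\,\Hess{F})$, so that
\[
\tfrac12\nabla\cdot(D\nabla F)+\gL_{\rho-\frac12\nabla\cdot D}F=\tfrac12\tr(D\,\Hess{F})+\rho\cdot\nabla F=AF,
\]
using $D=\zeta\zeta^\intercal$ and the drift/diffusion pair in~\eqref{eq:mu_sigma_prime}. Thus \eqref{eq:CauchyProblem} is the Kolmogorov backward equation $\partial_T F=AF$ posed on the interior of $\gS_\ell$, with homogeneous (absorbing) Dirichlet data on $\gS_\ell^c$ and initial datum $\mathbb{1}_{\gS_\ell}$; the divergence form is kept only because it is the form consumed by standard convection--diffusion discretizations.

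The identification of $F$ with \emph{the} solution of \eqref{eq:CauchyProblem} would then follow from a Feynman--Kac-type verification argument (cf.\ \cite[Theorem~1.3.17]{pham2009continuous} and Lemma~\ref{thm:Invariant Probability}). Given a solution $v$, apply It\^o's lemma to $t\mapsto v(Z_{t\wedge\tau_\ell},\,T-t)$ on $[0,T]$: on $\{t<\tau_\ell\}$ its finite-variation part is $(-\partial_T v+Av)(Z_t,T-t)\,dt=0$ by the PDE, so the process is a local martingale and, under the integrability afforded by the standing regularity assumptions, a true martingale. Taking expectations and letting $t=T$: on $\{\tau_\ell>T\}$ one reaches $v(Z_T,0)=\mathbb{1}_{\gS_\ell}(Z_T)=1$ since $Z_T\in\gS_\ell$ there; on $\{\tau_\ell\le T\}$ path-continuity forces $Z_{\tau_\ell}\in\{z[1]\le\ell\}$, where $v$ (continuously extended to $\partial\gS_\ell$) vanishes by the Dirichlet data. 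Hence $v(z,T)=\mP_z(\tau_\ell>T)=F(z,T;\ell)$. Conversely, $F$ itself solves \eqref{eq:CauchyProblem}: the semigroup identity $F(z,T+h;\ell)=\mE_z[\mathbb{1}_{\{\tau_\ell>h\}}F(Z_h,T;\ell)]$, divided by $h$ with $h\downarrow0$, gives $\partial_T F=AF$ in the interior; $F\equiv0$ for $z[1]<\ell$ because there $\tau_\ell=0$; and $F(\cdot,0;\ell)=\mathbb{1}_{\gS_\ell}$ off the measure-zero set $\partial\gS_\ell$, which does not affect the solution for $T>0$.

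\textbf{Main obstacle.} The nontrivial part is not the Markov/It\^o bookkeeping but the analytic well-posedness. The diffusion matrix $D=\zeta\zeta^\intercal$ has rank at most that of $\sigma$, and its first row $\gL_\sigma\phi$ is a linear combination of the remaining rows, so $D$ is \emph{always} rank-deficient on $\R^{n+1}$ and \eqref{eq:CauchyProblem} is degenerate parabolic. Existence, uniqueness, and enough boundary regularity of $v$ to run the verification step must be extracted from the assumed regularity of $f,g,\sigma,\phi,K$ and from $\nabla\phi\neq0$ on $\partial\gC$ (which keeps the killing interface $\{z[1]=\ell\}$ non-degenerate for the $\phi$-coordinate); where classical solutions are unavailable, \eqref{eq:CauchyProblem} should be read in the weak/viscosity sense, with $F$ defined unambiguously by the probabilistic representation above. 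A secondary technical point is the upgrade from local to true martingale, which is precisely where the integrability bounds implied by the regularity hypotheses are used.
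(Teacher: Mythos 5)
Your proposal is correct and shares the paper's skeleton — augment the state to $Z_t$, read $F$ as the survival probability of $Z_t$ in the super-level set $\{z[1]\ge\ell\}$, and pass from the generator form $\tfrac12\tr(D\,\Hess{F})+\gL_\rho F$ to the divergence form via the same Leibniz identity — but you establish the core PDE characterization by a different mechanism. The paper reduces to its Lemma~\ref{lem:Lemma1}, proved by inserting the soft-killing potential $V=\gamma\mathbb{1}_{\gM^c}$ into the Feynman--Kac formula (Lemma~\ref{thm:Invariant Probability}) and letting $\gamma\to\infty$, so that the exponential weight collapses to the indicator of never leaving $\gM$ and the equation on $\gM^c$ degenerates to the algebraic condition $F=0$. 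You instead kill the process hard at the exit time $\tau_\ell$ and run a stopped-martingale verification via It\^o plus optional stopping, with the semigroup identity for the converse direction. Your route buys an actual uniqueness statement (any sufficiently regular bounded solution of \eqref{eq:CauchyProblem} must equal the probability), which the paper's formal interchange of the $\gamma\to\infty$ limit with the PDE does not deliver; the paper's route obtains the vanishing of $F$ on $\{z[1]<\ell\}$ as a limit of globally posed Cauchy problems rather than imposing it as Dirichlet data. You also correctly flag an issue the paper is silent about: the first row of $\zeta$ is $\gL_\sigma\phi=\nabla\phi^\intercal\sigma$, a linear combination of the remaining rows, so $D=\zeta\zeta^\intercal$ is always singular on $\R^{n+1}$ and \eqref{eq:CauchyProblem} is degenerate parabolic; well-posedness is genuinely nontrivial and neither argument resolves it. One small caution on your verification step: freeze the \emph{time} argument at $\tau_\ell$ as well as the space argument (i.e., work with $v(Z_{t\wedge\tau_\ell},\,T-t\wedge\tau_\ell)$); otherwise the drift term of It\^o does not vanish after exit, and the terminal evaluation on $\{\tau_\ell\le T\}$ reads the initial datum $\mathbb{1}_{\{z[1]\ge\ell\}}$ at the boundary point rather than the Dirichlet datum $0$.
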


Theorem~\ref{thm:InvariantProbability_MainTheorem1} converts the  problem of characterizing the distribution function into a deterministic convection-diffusion problem.
%\todoalbert{can you add some textbook level reference to PDF solver}. \albert{I would actually suggest that we remove ``which has many numerical solvers.'' I don't know any specific solver and it depends on the setup.  In fact, for dimension \(n\) larger than 3, the method would be converting the PDE back to the Wiener process and just run Monte Carlo simulation.  Removing the comment on the numerical solver seems to read smoothly.} 
When we set \(\ell=0\), it gives the exact probability that $\gC$ is a forward invariant set with respect to~\eqref{eq:x_trajectory} during the time interval $[0,T]$. For arbitrary $\ell > 0$, it can be used to compute the probability of maintaining a safety margin of $\ell$ during $[0, T]$.

Moreover, the distribution of the first exit time from the safe set, $\exit_x(0)$ in \eqref{eq:first_exit_time}, and the first exit time from an arbitrary super level set \(\{x\in\R^n: \phi(x) \geq \ell\}\), is given below. 

\begin{theorem}\label{thm:InvariantProbability_MainTheorem2}
Consider system~\eqref{eq:x_trajectory} equipped with the control policy~\eqref{eq:ControlSignal_Ha} or~\eqref{eq:gradient} with the initial state \(X_0 = x\). Let $z = [\phi(x), x^\intercal]^\intercal\in\R^{n+1}$. Then, the cumulative distribution function (CDF) of the first exit time \(\exit_x(\ell)\)\footref{ft:inequality},
%For a stochastic process~\eqref{eq:x_trajectory}, safe region~\eqref{eq:safe_region} and the first exit time~\eqref{eq:first_exit_time}, 
\addtocounter{equation}{1}
\begin{align}\tag{\theequation.A}
    \FwdInvExit(z,t; \ell) = \mP (\exit_x(\ell) \leq t ),
\end{align}
is the solution to
\begin{align*}\tag{\theequation.B}\label{eq:MainTheorem2}
\begin{cases}
    \frac{\partial \FwdInvExit}{\partial t} =  \frac{1}{2}\grad\cdot(D\grad{\FwdInvExit}) + \gL_{\rho-{1\over 2}\grad{ \cdot} D}\FwdInvExit
    ,& z[1]\geq \ell, t>0,\\
    \FwdInvExit(z,t) = 1,  & z[1] < \ell, t>0.\\
    \FwdInvExit(z,0) = \mathbb{1}_{\{z[1]<\ell\}}(z), & z\in\R^{n+1},\\
\end{cases}
\end{align*}
where $D = \zeta\zeta^\intercal$.\footref{ft:spatial-derivative}
\end{theorem}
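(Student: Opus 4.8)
The plan is to deduce Theorem~\ref{thm:InvariantProbability_MainTheorem2} from Theorem~\ref{thm:InvariantProbability_MainTheorem1} by a complementation argument built on the path-continuity of $t\mapsto\phi(X_t)$. Because almost every sample path $s\mapsto\phi(X_s)$ is continuous on compact intervals, for each fixed $t>0$ the hitting time $\exit_x(\ell)$ satisfies the pathwise identity $\{\exit_x(\ell)\leq t\}=\{\minf_x(t)\leq\ell\}$, where $\minf_x(t)=\inf_{s\in[0,t]}\phi(X_s)$ is the running infimum of~\eqref{eq:min_phi}: if $\exit_x(\ell)\leq t$ then, the set $\{s:\phi(X_s)\leq\ell\}$ being closed, its infimum $\exit_x(\ell)$ lies in $[0,t]$ and $\phi(X_{\exit_x(\ell)})\leq\ell$, so $\minf_x(t)\leq\ell$; conversely, if $\minf_x(t)\leq\ell$ the infimum over $[0,t]$ is attained at some $s^{\star}$ with $\phi(X_{s^{\star}})\leq\ell$, so $\exit_x(\ell)\leq s^{\star}\leq t$. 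Hence
\[
\FwdInvExit(z,t;\ell)=\mP(\exit_x(\ell)\leq t)=\mP(\minf_x(t)\leq\ell)=1-\mP(\minf_x(t)>\ell),
\]
and since $\mP(\minf_x(t)=\ell)=0$ whenever the running infimum admits a density (cf.\ footnote~\ref{ft:inequality}), the right-hand side equals $1-\FwdInv(z,t;\ell)$, with $\FwdInv$ the CCDF furnished by Theorem~\ref{thm:InvariantProbability_MainTheorem1}.

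It then remains to verify that $\FwdInvExit:=1-\FwdInv$ solves the initial-boundary-value problem asserted in Theorem~\ref{thm:InvariantProbability_MainTheorem2}, and this is immediate. The right-hand side of the convection-diffusion equation in Theorem~\ref{thm:InvariantProbability_MainTheorem1} is a linear operator in $\FwdInv$ that annihilates constants --- both $\grad\cdot(D\grad\FwdInv)$ and $\gL_{\rho-\tfrac12\grad\cdot D}\FwdInv$ vanish when $\FwdInv$ is constant --- so on $\{z[1]\geq\ell\}$ we get $\partial_t\FwdInvExit=-\partial_t\FwdInv=\tfrac12\grad\cdot(D\grad\FwdInvExit)+\gL_{\rho-\tfrac12\grad\cdot D}\FwdInvExit$; the Dirichlet datum $\FwdInv=0$ on $\{z[1]<\ell\}$ turns into $\FwdInvExit=1$, and the initial datum $\FwdInv(z,0;\ell)=\mathbb{1}_{\{z[1]\geq\ell\}}$ turns into $\FwdInvExit(z,0;\ell)=\mathbb{1}_{\{z[1]<\ell\}}$, matching Theorem~\ref{thm:InvariantProbability_MainTheorem2} exactly. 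A self-contained alternative that bypasses Theorem~\ref{thm:InvariantProbability_MainTheorem1} is to recognise $\exit_x(\ell)$ as the first exit time of the augmented diffusion $Z_t$ of~\eqref{eq:MainTheorem_z} from the open set $\{z\in\R^{n+1}:z[1]>\ell\}$ and to apply the exit-time variant of the Feynman--Kac representation (Lemma~\ref{lem:EscapeTime}), which identifies $z\mapsto\mP_z(\exit_x(\ell)\leq t)$ as the solution of the backward Kolmogorov equation $\partial_t\FwdInvExit=A\FwdInvExit$ on $\{z[1]>\ell\}$ with absorbing data $\FwdInvExit\equiv1$ on $\{z[1]\leq\ell\}$ and indicator initial data; expanding the generator $AF=\rho\cdot\grad F+\tfrac12\tr(\zeta\zeta^\intercal\Hess F)$ and using $\tfrac12\tr(D\Hess F)=\tfrac12\grad\cdot(D\grad F)-\tfrac12(\grad\cdot D)\cdot\grad F$ with $D=\zeta\zeta^\intercal$ rewrites it in the stated convection-diffusion form.

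In either route the substantive work is inherited from Theorem~\ref{thm:InvariantProbability_MainTheorem1} and Lemma~\ref{lem:EscapeTime}, and that is where I expect the main obstacle to lie: the diffusion matrix $D=\zeta\zeta^\intercal$ of the augmented process is degenerate --- its rank is at most $k$ in the ambient dimension $n+1$, since the first coordinate of $Z_t$ equals $\phi$ evaluated at the remaining $n$ coordinates, so $Z_t$ is confined to an $n$-dimensional graph --- hence the PDE is not uniformly parabolic and its solution must be read in a suitable weak/viscosity sense. Making the probabilistic-to-PDE correspondence precise, and in particular obtaining continuity of $\FwdInvExit$ up to the interface $\{z[1]=\ell\}$, uses the standing assumption that $\grad\phi$ does not vanish there, so that the level set meets the support manifold of $Z_t$ in a smooth hypersurface the process genuinely crosses rather than being absorbed tangentially; one must also handle the jump of $\FwdInvExit(z,\cdot;\ell)$ at $t=0$ for $z$ on that interface and the strict-versus-non-strict inequalities separating $\exit_x$ and $\minf_x$, both of which are harmless under the density hypothesis of footnote~\ref{ft:inequality}. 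These are precisely the issues the Feynman--Kac variants assembled in Section~\ref{sec:Proof_theorem1_3} are designed to settle, so the bulk of the proof is deferred there.
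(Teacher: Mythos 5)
Your proposal is correct, but your primary route is genuinely different from the paper's. The paper proves this theorem by identifying $\exit_x(\ell)$ with the escape time $H_{\gM}$ of the augmented diffusion $Z_t$ from $\gM=\{z:z[1]\geq\ell\}$ and invoking Lemma~\ref{lem:EscapeTimeLemma3}, which is itself established by a Laplace-transform argument: a spectral parameter $\gamma$ is introduced, the elliptic exit-time Feynman--Kac formula (Lemma~\ref{lem:EscapeTime}) identifies $\hat{\lemmaF}(x,\gamma)=\mE_x[\gamma^{-1}e^{-\gamma H_{\gM}}]$ as the Laplace transform of the CDF, and inverting the transform yields the parabolic problem. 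Your main argument instead derives Theorem~\ref{thm:InvariantProbability_MainTheorem2} directly from Theorem~\ref{thm:InvariantProbability_MainTheorem1} via the pathwise identity $\{\exit_x(\ell)\leq t\}=\{\minf_x(t)\leq\ell\}$ (which holds exactly by path continuity, with no exceptional null set) together with the observation that the divergence-form operator and $\gL_{\rho-\frac12\grad\cdot D}$ annihilate constants, so $\FwdInvExit=1-\FwdInv$ inherits the PDE with complemented boundary and initial data. This is more elementary, bypasses Lemmas~\ref{lem:EscapeTime} and~\ref{lem:EscapeTimeLemma3} entirely, and is consistent with the complementation the paper itself asserts in \eqref{eq:safe_prob_invariant}; its only cost is the identification of $\mP(\minf_x(t)>\ell)$ with $\mP(\minf_x(t)\geq\ell)$, i.e.\ $\mP(\minf_x(t)=\ell)=0$, which the paper explicitly licenses in footnote~\ref{ft:inequality}. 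One small correction to your ``self-contained alternative'': Lemma~\ref{lem:EscapeTime} by itself only gives a time-independent (elliptic) boundary value problem for expectations of the form $\mE_x[\psi(X_{H_\gM})e^{-\int_0^{H_\gM}V}]$; the statement you attribute to it --- that the CDF $t\mapsto\mP_x(H_\gM\leq t)$ solves the parabolic equation with absorbing data --- is the content of Lemma~\ref{lem:EscapeTimeLemma3}, and extracting it from Lemma~\ref{lem:EscapeTime} requires the Laplace-transform step. Your remarks on the degeneracy of $D=\zeta\zeta^\intercal$ and on regularity at the interface identify genuine issues that the paper also leaves implicit under its standing regularity assumptions.
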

%\todoalbert{should we have the same style as thm 1 and 3? say "is the solution to the initial-boundary-value problem of the convection-diffusion equation on the super-level set"?}
%\todoalbert{Should the condition be \(\FwdInvExit(x,t)\to 0,  \norm{x}\to \infty\) or \(\FwdInvExit(x,t)\to x, \norm{x}\to \infty\)?}
Similarly, Theorem~\ref{thm:InvariantProbability_MainTheorem2} gives the distribution of the first exit time from an arbitrary super level set of barrier function as the solution to a deterministic convection-diffusion equation. When we set \(\ell=0\), it can also be used to compute the exact probability of staying within the safe set during any time interval.

\subsection{Probability of Forward Convergence}
\label{sec:ForwardConvergence}
When the state initiates outside the safe set, the distribution of the distance from the safe set, \(\maxf_x(T)\) in~\eqref{eq:max_phi}, is given below. 
\begin{theorem}\label{thm:ConvergenceProbability_MainTheorem3}
Consider system~\eqref{eq:x_trajectory} equipped with the control policy~\eqref{eq:ControlSignal_Ha} or~\eqref{eq:gradient} with the initial state \(X_0 = x\). Let $z = [\phi(x), x^\intercal]^\intercal\in\R^{n+1}$. Then, the CDF of \(\maxf_x(T)\),\footref{ft:inequality}   
\addtocounter{equation}{1}
\begin{equation}\tag{\theequation.A}
    \FwdConv(z,T;\ell) = \mP\left(\maxf_x(T) < \ell \right), \quad \ell \in \R,
\end{equation}
is the solution to %the initial-boundary-value problem of 
%the convection-diffusion equation on the sub-level set \(\{z\in\R^{n+1}: z[1]<\ell\}\in\R^{n+1}\)
    \begin{align}\tag{\theequation.B}
    \label{eq:pde_thm3}
        \begin{cases}
         {\partial \FwdConv\over\partial T} = {1\over 2}\grad\cdot(D\grad \FwdConv) + \gL_{\rho - {1\over 2}\grad\cdot D}\FwdConv,& z[1] < \ell, T>0,\\
         \FwdConv(z,T;\ell) = 0,& z[1]\geq\ell, T>0,\\
         \FwdConv(z,0;\ell) = \mathbb{1}_{\{z[1]<\ell\}}(z),&z\in\R^{n+1},\\
        \end{cases}
    \end{align}
where $D = \var\var^\intercal$.\footref{ft:spatial-derivative} %Here the spatial derivatives \(\grad{}\) and \(\gL\) apply to the \(z\) variable. 
\end{theorem}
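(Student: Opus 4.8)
The plan is to mirror the argument used for Theorem~\ref{thm:InvariantProbability_MainTheorem1}, with the super-level set $\{z[1]\geq\ell\}$ replaced by the sub-level set $\mathcal{O}:=\{z\in\R^{n+1}:z[1]<\ell\}$. First I would note that, since $t\mapsto\phi(X_t)$ is almost surely continuous, the event $\{\maxf_x(T)<\ell\}$ coincides with the event that the augmented process $Z_t$ of~\eqref{eq:MainTheorem_z} stays in $\mathcal{O}$ throughout $[0,T]$; writing $\tau:=\inf\{t\geq0:Z_t[1]\geq\ell\}$ for the first-passage time of $\phi(X_t)$ to level $\ell$, this reads $\{\maxf_x(T)<\ell\}=\{\tau>T\}$. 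Consequently, if $z[1]\geq\ell$ then $\maxf_x(T)\geq\phi(x)=z[1]\geq\ell$ surely, so $\FwdConv(z,T;\ell)=0$, which is the stated boundary condition; and at $T=0$ we get $\FwdConv(z,0;\ell)=\mathbb{1}_{\{z[1]<\ell\}}(z)$.

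Next I would represent $\FwdConv(z,T;\ell)=\mE_z[\mathbb{1}_{\{\tau>T\}}]$ as the solution of a Dirichlet initial--boundary-value problem via a Feynman--Kac--type lemma, exactly as in \cite[Theorem~1.3.17]{pham2009continuous} and in the variant used to prove Theorem~\ref{thm:InvariantProbability_MainTheorem1}. Since $Z_t$ solves~\eqref{eq:MainTheorem_z} with drift $\ave$ and diffusion $\var$, its infinitesimal generator is $AF=\ave\cdot\grad{F}+\frac12\tr(D\,\Hess{F})$ with $D=\var\var^\intercal$. The sub-Markovian semigroup obtained by killing $Z$ at the first exit from $\mathcal{O}$ satisfies the backward equation $\partial_T u=Au$ inside $\mathcal{O}$ with homogeneous Dirichlet data on $\partial\mathcal{O}=\{z[1]=\ell\}$; applying it to the initial function $\mathbb{1}_{\mathcal{O}}$ yields precisely that $\FwdConv$ solves $\partial_T\FwdConv=A\FwdConv$ on $\{z[1]<\ell\}$, $\FwdConv\equiv0$ on $\{z[1]\geq\ell\}$, with initial data $\mathbb{1}_{\{z[1]<\ell\}}$. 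A short computation then rewrites $A\FwdConv$ in the divergence form $\frac12\grad\cdot(D\grad\FwdConv)+\gL_{\ave-\frac12\grad\cdot D}\FwdConv$ appearing in~\eqref{eq:pde_thm3} (expand $\grad\cdot(D\grad\FwdConv)$ and cancel the first-order terms). For rigour I would apply It\^o's lemma to $s\mapsto\FwdConv(Z_s,T-s;\ell)$ on $[0,\tau\wedge T]$, take expectations and invoke optional stopping to identify $\mE_z[\FwdConv(Z_{\tau\wedge T},0;\ell)]$ with $\mE_z[\mathbb{1}_{\{\tau>T\}}]$, and use uniqueness for the IBVP in the relevant function class to conclude that the probabilistic object is the solution.

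The step I expect to be the main obstacle is the careful treatment of the boundary $\{z[1]=\ell\}$ and of the non-smooth initial datum. Two issues arise: (i) $\mathbb{1}_{\{z[1]<\ell\}}$ is discontinuous across $\partial\mathcal{O}$, so for $T>0$ the equality with the PDE solution must be understood in a weak/almost-everywhere sense (parabolic regularization makes $\FwdConv$ smooth in the interior, but one still has to check that the Dirichlet condition is attained in the appropriate sense); and (ii) one needs enough non-degeneracy of~\eqref{eq:MainTheorem_z} transverse to $\{z[1]=\ell\}$ --- supplied by $\gL_\sigma\phi$ together with the standing non-vanishing-gradient assumption on $\partial\gC$ --- so that the boundary points are regular and $\tau$ is well behaved, which is what makes the probabilistic representation and the PDE solution coincide. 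Since the measure-theoretic and path-regularity bookkeeping is identical to that of Theorem~\ref{thm:InvariantProbability_MainTheorem1} with the half-spaces $\{z[1]\geq\ell\}$ and $\{z[1]<\ell\}$ and the strict/non-strict inequalities interchanged (consistent with the CDF/CCDF conventions of footnote~\ref{ft:inequality}), I would present Theorem~\ref{thm:ConvergenceProbability_MainTheorem3} as a direct consequence of the same lemma, recording only these changes rather than repeating the proof in full.
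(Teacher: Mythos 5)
Your proposal follows essentially the same route as the paper: the paper proves Theorem~\ref{thm:ConvergenceProbability_MainTheorem3} by applying the same Lemma~\ref{lem:Lemma1} used for Theorem~\ref{thm:InvariantProbability_MainTheorem1} to the augmented process $Z_t$ with $\gM=\{z\in\R^{n+1}:z[1]<\ell\}$, identifying $\{\maxf_x(T)<\ell\}$ with $\{Z_t\in\gM,\ \forall t\in[0,T]\}$, and then rewriting the generator in divergence form via the identity \eqref{eq:VectorIdentity}. Your additional remarks on path continuity, boundary regularity, and the non-smooth initial datum are sound refinements that the paper leaves implicit, but they do not change the argument.
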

% \todoalbert{CDF has equality in the inequality? should we set $\maxf_x(T) \leq \ell$}
Theorem~\ref{thm:ConvergenceProbability_MainTheorem3} gives the distribution of safety distance \(\maxf_x(T)\) as a deterministic convection-diffusion problem. When we set \(\ell=0\), it provides the exact probability that a stochastic process~\eqref{eq:x_trajectory} initiating outside the safe set enters the safe set during the time interval $[0,T]$. When we set $\ell<0$, it can be used to compute the probability of coming close to $|\ell|$-distance from the safe set along the time interval [0, T].

Finally, the distribution of the recovery time from the unsafe set, \(\entrance_x(0)\) in~\eqref{eq:first_entry_time}, and the entry time to an arbitrary super level set of the barrier function, \(\entrance_x(\ell)\), is given below. 
\begin{theorem}\label{thm:ConvergenceProbability_MainTheorem4}
Consider system~\eqref{eq:x_trajectory} equipped with the control policy~\eqref{eq:ControlSignal_Ha} or~\eqref{eq:gradient} with the initial state \(X_0 = x\). Let $z = [\phi(x), x^\intercal]^\intercal\in\R^{n+1}$. Then, the CDF of the recovery time \(\entrance_x(\ell)\),\footref{ft:inequality}  
\addtocounter{equation}{1}
\begin{align}\tag{\theequation.A}
    \FwdConvExit(z,t: \ell)=\mP ( \entrance_x(\ell) \leq t ),
\end{align}
is the solution to
\begin{align}\tag{\theequation.B}
\begin{cases}
    \frac{\partial \FwdConvExit}{\partial t}(z,t) =  \frac{1}{2}\grad\cdot(D\grad{\FwdConvExit}) + \gL_{\ave-{1\over 2} \grad{\cdot D}}{\FwdConvExit}
    ,& z[1] < \ell,t>0,\\
    \FwdConvExit(x,t) = 1,  & z[1]\geq \ell,t>0,\\
    \FwdConvExit(x,0) = \mathbb{1}_{\{z[1]\geq \ell\}}(z),& z\in\R^{n+1},\\
\end{cases}
\end{align}
where $D =\var\var^\intercal$.\footref{ft:spatial-derivative}
\end{theorem}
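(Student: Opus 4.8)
The plan is to recognize $\FwdConvExit$ as the cumulative distribution of a first-passage time of the augmented diffusion $Z_t$ of~\eqref{eq:MainTheorem_z} and to characterize it by the same killed-diffusion Feynman--Kac argument used for Theorem~\ref{thm:InvariantProbability_MainTheorem2}, with the sub-level and super-level regions of $z[1]$ interchanged (entering the super level set $\{\phi\ge\ell\}$ is the exit problem from $\{\phi<\ell\}$). Write $z=[\phi(x),x^\intercal]^\intercal$. Since $\phi(X_t)=Z_t[1]$, we have $\entrance_x(\ell)=\inf\{t\ge 0:\ Z_t[1]\ge\ell\}$, the first entry time of the Markov process $Z$ into the closed half-space $\gH_\ell:=\{z\in\R^{n+1}:\ z[1]\ge\ell\}$; by the strong Markov property this depends on the starting point only through $z$, so $\FwdConvExit(z,t;\ell)=\mP_z(\entrance\le t)$ is a well-defined function of $(z,t)$. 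It is convenient to pass to the complementary ``survival'' function $\bar\FwdConvExit(z,t):=1-\FwdConvExit(z,t;\ell)=\mP_z\big(Z_s[1]<\ell\text{ for all }s\in[0,t]\big)=\mE_z\big[\mathbb{1}_{\{Z_t\in\gD_\ell\}}\,\mathbb{1}_{\{t<\tau_{\gD_\ell}\}}\big]$, where $\gD_\ell:=\{z\in\R^{n+1}:\ z[1]<\ell\}$ and $\tau_{\gD_\ell}$ is the exit time of $Z$ from $\gD_\ell$.

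Next I would apply the killed-process Feynman--Kac representation --- the entry-time variant recorded as Lemma~\ref{lem:EscapeTimeLemma3}, which mirrors the exit-time Lemma~\ref{lem:EscapeTime} --- to conclude that $\bar\FwdConvExit$ solves the backward problem $\partial_t\bar\FwdConvExit = A\bar\FwdConvExit$ on $\gD_\ell$, with absorbing boundary data $\bar\FwdConvExit\equiv 0$ on $\gH_\ell$ and initial data $\bar\FwdConvExit(\cdot,0)=\mathbb{1}_{\gD_\ell}$, where $A$ is the infinitesimal generator of $Z$, namely $AF=\ave\cdot\grad F+\tfrac12\tr\big(D\,\Hess{F}\big)$ with $D:=\var\var^\intercal$ and $\ave,\var$ as in~\eqref{eq:mu_sigma_prime} (this is~\eqref{eq:inf_gen_phi1} lifted to the $(n{+}1)$-dimensional state). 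Because $A$ annihilates constants, $\FwdConvExit=1-\bar\FwdConvExit$ satisfies the same convection-diffusion equation on $\gD_\ell$, the boundary condition $\FwdConvExit\equiv 1$ on $\gH_\ell$, and the initial condition $\FwdConvExit(\cdot,0)=1-\mathbb{1}_{\gD_\ell}=\mathbb{1}_{\gH_\ell}$ --- exactly the initial-boundary-value problem in the statement once $A$ is written in the displayed form.

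Rewriting $A$ in divergence-plus-convection form is routine: expanding $\grad\cdot(D\grad F)=\sum_{i,j}\partial_i(D_{ij}\partial_j F)=\sum_{i,j}(\partial_i D_{ij})(\partial_j F)+\sum_{i,j}D_{ij}\partial_i\partial_j F=(\grad\cdot D)\cdot\grad F+\tr\big(D\,\Hess{F}\big)$ yields $\tfrac12\tr\big(D\,\Hess{F}\big)=\tfrac12\grad\cdot(D\grad F)-\tfrac12(\grad\cdot D)\cdot\grad F$, hence
\[
AF=\tfrac12\,\grad\cdot(D\grad F)+\big(\ave-\tfrac12\,\grad\cdot D\big)\cdot\grad F=\tfrac12\,\grad\cdot(D\grad F)+\gL_{\ave-\frac12\grad\cdot D}F,
\]
which is the operator appearing in the theorem; all $\grad$ and $\gL$ here act on $z\in\R^{n+1}$, consistent with footnote~\ref{ft:spatial-derivative}.

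The main obstacle is the rigorous justification of the second step: that the survival probability is the (unique) solution of the Dirichlet-type parabolic problem on the \emph{unbounded} region $\gD_\ell\subset\R^{n+1}$. One must check that $Z$ meets $\partial\gD_\ell=\{z[1]=\ell\}$ regularly so the boundary value is attained continuously, invoke the strong Markov property at $\entrance$, and control growth at infinity via a localization (Dynkin) argument under the SDE regularity hypotheses of footnote~\ref{ft:FN-Solution}; this analytic work is precisely what Lemma~\ref{lem:EscapeTimeLemma3} packages. Everything else --- the reduction to $Z$, passing to the complement $\FwdConvExit=1-\bar\FwdConvExit$, and the algebraic rewriting of $A$ --- is identical in form to the proof of Theorem~\ref{thm:InvariantProbability_MainTheorem2}.
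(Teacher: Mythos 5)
Your proposal is correct and follows essentially the same route as the paper: lift to the augmented process $Z_t$ of~\eqref{eq:MainTheorem_z}, identify $\entrance_x(\ell)$ with the exit time of $Z$ from $\gM=\{z[1]<\ell\}$, invoke Lemma~\ref{lem:EscapeTimeLemma3}, and rewrite the generator in divergence form via the identity $\grad\cdot(D\grad F)=\gL_{\grad\cdot D}F+\tr(D\Hess{F})$. Your detour through the survival function $1-\FwdConvExit$ is a cosmetic repackaging of what Lemma~\ref{lem:EscapeTimeLemma3} already states directly for the CDF, and your closing remarks correctly locate the remaining analytic burden inside that lemma.
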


Theorem~\ref{thm:ConvergenceProbability_MainTheorem4} gives the distribution of the re-entry time as a solution to a deterministic convection-diffusion equation. When we set \(\ell=0\), it can be used to compute the exact probability of entering the safe region during any time intervals. When we set $\ell < 0$, it can be used to study the first time to reach \(|\ell|\)-close to the safe set.\footref{ft:distance_meansure}

\subsection{Proof of Theorem~\ref{thm:InvariantProbability_MainTheorem1} and Theorem~\ref{thm:ConvergenceProbability_MainTheorem3}}
\label{sec:Proof_theorem1_3}
The proofs of Theorem~\ref{thm:InvariantProbability_MainTheorem1} and Theorem~\ref{thm:ConvergenceProbability_MainTheorem3} are based on the lemmas presented below. These lemmas relate the distributions of various functionals of a diffusion process \(X_t\in\R^n, t\in\R_+\) with deterministic convection-diffusion equations. Let \(X_t\in\R^n, t\in\R_+\) be a solution of the SDE
\begin{equation}\label{eq:Wiener_drift_diffusion}
    d X_{t}=\mu(X_t) d t+\sigma(X_t) d W_{t},
\end{equation}
We assume sufficient regularity in the coefficients of~\eqref{eq:Wiener_drift_diffusion} such that \eqref{eq:Wiener_drift_diffusion} has a unique solution.\footref{ft:FN-Solution} %the vector field \(\mu\colon\R^n\to\R^n\) and tensor field \(\sigma\colon\R^n\to\R^{n\times p}\) are globally Lipschitz continuous.

We first present a variant of the Feynman--Kac Representation~\cite[Theorem 1.3.17]{pham2009continuous}.

\begin{lemma}[Feynman--Kac Representation]\label{thm:Invariant Probability}
Consider the diffusion process~\eqref{eq:Wiener_drift_diffusion} with $X_0=x \in \R^n$. Let $V:\R^n\times \R_+ \to \R$, $\beta: \R^n\times \R_+\to \R$ and \(\psi\colon\R^n\to\R\)
be some given functions.  The solution of the Cauchy problem 
\begin{align}
\begin{cases}
    \frac{\partial \lemmaF}{\partial T} 
    =  
    \textstyle\frac{1}{2}\tr(\sigma\sigma^\intercal\Hess{\lemmaF}) + \gL_\mu\lemmaF
    - V\lemmaF + \beta,
 %&x\in\R^n, T>0,
 &T>0,
    %\parbox[t]{5em}{\((x,T)\in\)\\ \(\R^n\times\R_+\),}
    \\
    \lemmaF(x,0) = \psi(x) %& x\in\R^n 
\end{cases}
\end{align}
is given by
\begin{align}
\begin{split}
\lemmaF(x,T) &= \mE_x\Biggl[
\textstyle
e^{-\int_0^T V(X_t,T-t)\, dt}
\psi(X_T)\\
&\left. + 
\int_0^T
\textstyle e^{-\int_0^\tau V(X_t,T-t)\, dt}
\beta(X_\tau,T-\tau)\, d\tau \right].
\end{split}
\end{align}
\end{lemma}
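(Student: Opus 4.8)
The plan is to run the classical martingale argument underlying the Feynman--Kac formula, adapted to the Cauchy (initial-value, time-reversed) form and to the presence of the running source $\beta$ and the potential $V$; this simultaneously shows that any sufficiently regular solution of the Cauchy problem must coincide with the stated expectation. Fix $T>0$ and assume $\lemmaF$ solves the Cauchy problem with enough regularity and growth for the stochastic integrals below to be true martingales. First I would introduce the discount factor $\Lambda_s := \exp\!\bigl(-\int_0^s V(X_t,T-t)\,dt\bigr)$ for $s\in[0,T]$, which solves $d\Lambda_s = -V(X_s,T-s)\,\Lambda_s\,ds$, and set
\begin{align*}
  M_s := \Lambda_s\,\lemmaF(X_s,T-s) + \int_0^s \Lambda_\tau\,\beta(X_\tau,T-\tau)\,d\tau .
\end{align*}
Once $M$ is shown to be a martingale we are done: $\mE_x[M_T]=\mE_x[M_0]$, and $M_0=\lemmaF(x,T)$ since $\Lambda_0=1$, $X_0=x$, and the integral term is empty at $s=0$; on the other hand the initial condition $\lemmaF(\cdot,0)=\psi$ gives $M_T = \Lambda_T\,\psi(X_T) + \int_0^T \Lambda_\tau\,\beta(X_\tau,T-\tau)\,d\tau$, whose expectation is precisely the right-hand side in the statement.

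Second, I would verify the martingale property by It\^o's calculus. Applying It\^o's lemma to $(s,x)\mapsto \lemmaF(x,T-s)$ along $X$ (noting $\partial_s[\lemmaF(x,T-s)] = -\partial_T\lemmaF(x,T-s)$) and then the It\^o product rule to $\Lambda_s\,\lemmaF(X_s,T-s)$, the finite-variation part of $dM_s$ is
\begin{align*}
  \Lambda_s\Bigl(-\partial_T\lemmaF + \gL_\mu\lemmaF + \tfrac12\tr(\sigma\sigma^\intercal\Hess{\lemmaF}) - V\lemmaF + \beta\Bigr)\!(X_s,T-s)\,ds ,
\end{align*}
which vanishes identically because $\lemmaF$ satisfies the PDE. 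Hence $dM_s = \Lambda_s\,(\grad{\lemmaF}\cdot\sigma)(X_s,T-s)\,dW_s$, so $M$ is at least a local martingale.

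Finally, I would upgrade ``local martingale'' to ``true martingale'': take a localizing sequence $\tau_k\uparrow\infty$, apply optional stopping on $[0,T\wedge\tau_k]$, and let $k\to\infty$ using the assumed regularity/growth control (for instance an $L^2$ bound on $\int_0^T \Lambda_s^2\,\|(\grad{\lemmaF}\cdot\sigma)(X_s,T-s)\|^2\,ds$ together with dominated convergence for the terminal and source terms). This integrability step is where the unspecified regularity hypotheses actually enter, and it is the main obstacle; everything else is bookkeeping. The one subtlety to watch is the time-reversal accounting: the exponent in $\Lambda_\tau$ must be $\int_0^\tau V(X_t,T-t)\,dt$, with reversed time $T-t$, exactly as written in the statement, since this is what makes the drift terms cancel in the previous step. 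For completeness one may cite \cite[Theorem 1.3.17]{pham2009continuous} for the version without the source term and observe that the same computation absorbs the added $\beta$.
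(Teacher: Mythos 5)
Your proof is correct. Note that the paper does not actually prove this lemma: it imports it from \cite[Theorem~1.3.17]{pham2009continuous} and only remarks that its version runs forward in the horizon variable $T$ rather than backward in $t$. Your martingale argument is precisely the standard proof underlying that cited result, and you handle the two points that the time-reversed, source-augmented form requires: the sign flip $\partial_s[\lemmaF(x,T-s)]=-\partial_T\lemmaF(x,T-s)$ that makes the drift of $M_s$ cancel against the PDE, and the absorption of $\beta$ into the additive integral term of $M_s$. The computation of the finite-variation part is right, the identification $M_0=\lemmaF(x,T)$ and $\mE_x[M_T]=\text{RHS}$ is right, and you correctly flag that the only substantive hypothesis is the integrability needed to pass from local martingale to martingale --- which the paper sweeps into its blanket ``sufficient regularity'' assumption on the SDE coefficients. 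Nothing is missing relative to what the paper itself relies on.
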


While the original statement given in~\cite[Theorem 1.3.17]{pham2009continuous} presents \(\lemmaF\) as a function of \(t\) that moves backward in time, we present \(\lemmaF\) as a function of \(T\) that moves forward in time.  Even though both approaches express the time derivative with opposite signs, both yield the same probability. The representation in Lemma~\ref{thm:Invariant Probability} can be used to derive the following result. 
%While the original statement given in~\cite[Theorem 1.3.17]{pham2009continuous} presents \(\lemmaF\) backwards in time (\ie \(t_n, t_{n-1}, \ldots\)), we present $\lemmaF$ forwards in time (\ie $t_0, t_1, ...$). Even though both approaches express $\frac{\partial \lemmaF}{\partial t}$ with opposite sign, both yield the same probability.

\begin{lemma}\label{lem:Lemma1}
Let \(\gM\subset\R^n\) be a domain.  Consider the diffusion process \eqref{eq:Wiener_drift_diffusion} with the initial condition \(X_0=x\in\R^n\).
Then the function \(\lemmaF:\R^n\times [0,\infty)\to \R\) defined by
\begin{align}
U(x,T):=\mP_x(X_t\in\gM, \forall t\in [0,T])
\end{align}
is the solution to the convection-diffusion equation
\begin{align}
\label{eq:Lemma1CDE}
    \begin{cases}
     {\partial \lemmaF\over\partial T} = {1\over 2}\tr(\sigma\sigma^\intercal\Hess\lemmaF) + \gL_\mu \lemmaF,
     & x\in\gM, T>0,\\
     \lemmaF(x,T) = 0,&x\notin\gM, T>0,\\
     \lemmaF(x,0) = \mathbb{1}_{\gM}(x),& x\in\R^n.
    \end{cases}
\end{align}
\end{lemma}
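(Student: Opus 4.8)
The plan is to obtain $\lemmaF$ as a monotone limit of Feynman--Kac representations supplied by Lemma~\ref{thm:Invariant Probability}, using a diverging potential to absorb the process once it leaves $\gM$. Concretely, for $k\in\bbN$ I would take the (time-independent) potential $V_k(y):=k\,\mathbb{1}_{\gM^c}(y)$, set $\beta\equiv 0$ and $\psi:=\mathbb{1}_{\gM}$, and apply Lemma~\ref{thm:Invariant Probability} to conclude that
\begin{align*}
\lemmaF_k(x,T):=\mE_x\!\left[e^{-\int_0^T V_k(X_t)\,dt}\,\mathbb{1}_{\gM}(X_T)\right]
\end{align*}
solves $\partial_T\lemmaF_k=\tfrac12\tr(\sigma\sigma^\intercal\Hess{\lemmaF_k})+\gL_\mu\lemmaF_k-V_k\lemmaF_k$ with $\lemmaF_k(\cdot,0)=\mathbb{1}_{\gM}$. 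Since $V_k$ vanishes on the open set $\gM$, each $\lemmaF_k$ already solves the \emph{homogeneous} equation $\partial_T\lemmaF_k=\tfrac12\tr(\sigma\sigma^\intercal\Hess{\lemmaF_k})+\gL_\mu\lemmaF_k$ on $\gM\times(0,\infty)$; the sole role of $V_k$ is to force the exterior (Dirichlet) behaviour in the limit.

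Next I would identify the limit. The weights $e^{-\int_0^T V_k(X_t)\,dt}$ are nonincreasing in $k$ and bounded by $1$, so $\lemmaF_k\downarrow\lemmaF_\infty$ pointwise for some $\lemmaF_\infty\in[0,1]$. Writing $\int_0^T V_k(X_t)\,dt=k\cdot\mathrm{Leb}\{t\in[0,T]:X_t\notin\gM\}$, on the event $\{X_t\in\gM\;\forall t\in[0,T]\}$ the weight is identically $1$ and $\mathbb{1}_{\gM}(X_T)=1$, while on its complement path-continuity together with the regularity of $\partial\gM$ forces the occupation time of $\gM^c$ to be strictly positive, so the weight tends to $0$. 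Dominated convergence then yields $\lemmaF_k(x,T)\to\mP_x(X_t\in\gM,\ \forall t\in[0,T])=\lemmaF(x,T)$, so that $\lemmaF_\infty=\lemmaF$.

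Finally I would pass the equation to the limit. The exterior and initial conditions are immediate: for $x\notin\gM$, $\lemmaF(x,T)\le\mP_x(X_0\in\gM)=0$, and $\lemmaF(\cdot,0)=\mathbb{1}_{\gM}$ by definition. On $\gM\times(0,\infty)$, all $\lemmaF_k$ solve one and the same uniformly parabolic equation and are uniformly bounded, so interior parabolic (Schauder / Krylov--Safonov) estimates give bounds for $\lemmaF_k$ in $C^{2,1}$ on compact subsets of $\gM\times(0,\infty)$ that are uniform in $k$; combined with the pointwise convergence just established, this upgrades the convergence to $C^{2,1}_{\mathrm{loc}}(\gM\times(0,\infty))$ and shows $\lemmaF$ solves $\partial_T\lemmaF=\tfrac12\tr(\sigma\sigma^\intercal\Hess{\lemmaF})+\gL_\mu\lemmaF$ there. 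Standard uniqueness for the resulting initial/exterior-value problem then identifies $\lemmaF$ as the solution of~\eqref{eq:Lemma1CDE}.

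The main obstacle I expect is the passage $k\to\infty$, and it splits into two points. First, showing the Feynman--Kac weight converges a.s.\ to $\mathbb{1}_{\{X_t\in\gM\,\forall t\le T\}}$ requires that a path which leaves $\gM$ spends positive Lebesgue time in $\R^n\setminus\overline{\gM}$ and that $\partial\gM$ carries zero occupation time --- both consequences of the nondegeneracy/regularity hypotheses on $\sigma$ and on $\phi$ near $\partial\gM$ (regularity of boundary points). Second, transferring mere pointwise convergence of $\lemmaF_k$ into convergence strong enough to commute with the second-order operator is exactly where it matters that all the $\lemmaF_k$ satisfy the \emph{same} interior equation, which is what makes the uniform interior parabolic estimates available.
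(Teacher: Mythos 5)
Your proof follows essentially the same route as the paper's: both apply the Feynman--Kac representation of Lemma~\ref{thm:Invariant Probability} with $\psi=\mathbb{1}_{\gM}$, $\beta\equiv 0$, and the killing potential $\gamma\mathbb{1}_{\gM^c}$, then let $\gamma\to\infty$ so that the weight converges to the indicator of the survival event while the exterior equation degenerates to the Dirichlet condition $\lemmaF=0$. The only difference is that you make explicit two technical points the paper leaves implicit (positive occupation time of $\gM^c$ after an exit, and uniform interior parabolic estimates to pass the PDE to the limit), which strengthens rather than changes the argument.
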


%The proof of Lemma~\ref{lem:Lemma1} is built upon a variant of the Feynman--Kac Representation~\cite[Theorem 1.3.17]{pham2009continuous}.

\begin{proof}%[Proof (Lemma~\ref{lem:Lemma1})]

Applying Lemma~\ref{thm:Invariant Probability} with
\begin{align*}
    \beta(x,t) \equiv 0,\, \psi(x) = \mathbb{1}_{\gM}(x),\,
    V(x,t) = \gamma\mathbb{1}_{\gM^c}(x),
\end{align*}
we have
\begin{align}
\label{eq:e2p_0}
    \lemmaF(x,T)
& = \mE_x\left[ e^{-\int^T_0 \gamma \mathbb{1}_{\gM^c}(X_t)\,dt}\psi(X_T)\right].
\end{align}
and
\begin{align}
\label{eq:eq_1}
    \begin{cases}
    \frac{\partial \lemmaF}{\partial T} = \frac{1}{2}\tr(\sigma\sigma^\intercal\Hess{\lemmaF}) + \gL_\mu\lemmaF,&x\in\gM, T>0,\\
    \frac{\partial \lemmaF}{\partial T} = \frac{1}{2}\tr(\sigma\sigma^\intercal\Hess{\lemmaF}) + \gL_\mu\lemmaF -\gamma\lemmaF,&x\notin \gM, T>0,\\
    \lemmaF(x,0) = \mathbb{1}_{\gM}(x),& x\in\R^n.
\end{cases}
\end{align}

Now, we take the limit of \(\gamma\to\infty\).  Since
\begin{align*}
    \lim_{\gamma \rightarrow \infty} e^{-\int^T_0 \gamma \mathbb{1}_{\gM^c}(X_t)\,dt } = 
\begin{cases}
0, & \text{if } X_t \notin \gM , \exists t\in[0,T],\\
1, & \text{otherwise}.
\end{cases},
\end{align*} \eqref{eq:e2p_0} becomes
\begin{align}\label{eq:Lemma1ProofFN-Solution}
    U(x,T)&= \mE_x[\mathbb{1}_{\{ X_t \in \gM , \forall t\in[0,T] \}}] \\
    & = \mP_x(X_t \in \gM , \forall t\in[0,T]).
\end{align}
Meanwhile, under the limit of \(\gamma\to\infty\), the part of \eqref{eq:eq_1} with \(x\notin \gM\) and \(T>0\)
reduces to the algebraic condition
\begin{align}\label{eq:Lemma1ProofCon}
    \lemmaF(x,T) = 0,\quad x\notin \gM, T>0.
\end{align}
Combining~\eqref{eq:eq_1}, \eqref{eq:Lemma1ProofFN-Solution}, and ~\eqref{eq:Lemma1ProofCon} and gives Lemma~\ref{lem:Lemma1}
% Due to
% \begin{align}
% \label{eq:InvariantProbability_GammaInfinity}
% &\lim_{\gamma \rightarrow \infty} \exp\left(-\int^T_0 \gamma \mathbb{1}\{X_t\notin \gM\}\,dt \right) \\
% &= 
% \begin{cases}
% 0 & \text{if } X_t \notin \gM , \exists t\in[0,T]\\
% 1 & \text{otherwise}
% \end{cases},  
% \end{align}
% the right hand side of \eqref{eq:e2p_0} equals to
% \begin{align}
%   \label{eq:e2p_1}
%     & \mE_x[\mathbb{1}\{ X_t \in \gM , \forall t\in[0,T] \}] \\
%     \label{eq:e2p_2}
%     & = \mP_x(X_t \in \gM , \forall t\in[0,T])
% \end{align}
% When $X_t\in\gM$ and~\eqref{eq:eq_1} becomes
% \begin{align}\label{eq:eq_2}
% \begin{cases}
%     \frac{\partial \lemmaF}{\partial t} = \frac{1}{2}\tr(\sigma^\intercal\sigma\Hess{\lemmaF}) + \gL_\mu\lemmaF ,& x\in\gM, t\in(0,T],\\
%     \lemmaF(x,0) = 1, & x\in\gM.
% \end{cases}
% \end{align}
% When $x\in\gM$ and $\gamma\to \infty$, the solution of~\eqref{eq:eq_1} is 
% \begin{align}\label{eq:eq_3}
%     \begin{cases}
%     \lemmaF(x,t) = 0,  & x\notin\gM, t\in(0,T],\\
%     \lemmaF(x,0) = 0, & x\notin\gM, \\
%     \lemmaF(x,t)\to 0, & \norm{x}\to\infty.\\
%     \end{cases}
% \end{align}
% Combine~\eqref{eq:eq_2} and~\eqref{eq:eq_3} and finishe the proof.
\end{proof}

Using Lemma~\ref{lem:Lemma1}, we can prove Theorem~\ref{thm:InvariantProbability_MainTheorem1} as follows.
\begin{proof}[Proof (Theorem~\ref{thm:InvariantProbability_MainTheorem1})]
Consider the augmented space space $Z_t = [ \phi(X_t), X_t^\intercal]^\intercal \in \R^{n+1}$. The stochastic process \(Z_t\) is the solution to~\eqref{eq:MainTheorem_z} with parameters $\ave$ and $\var$ defined in~\eqref{eq:mu_sigma_prime} with the initial state
\begin{align}
    Z_0 = z = [\phi(x),x^\intercal]^\intercal.
\end{align}
Let us define 
\begin{align}
\label{eq:set_thm1} 
    \gM = \{ z \in \R^{n+1} : z[1] \geq \ell \} .
\end{align}
From Lemma~\ref{lem:Lemma1},
\begin{align*}
    F(x,T;\ell) = \mP_x( z_t \in \gM , \forall t \in [ 0, T] )
\end{align*}
is the solution to the convection-diffusion equation \begin{align}
\label{eq:lem1_pde}
\begin{cases}
{\partial \FwdInv \over\partial T} = {1\over 2}\operatorname{tr}(\zeta\zeta^\intercal\operatorname{Hess}\FwdInv) + \gL_\rho \FwdInv,& z\in \gM, T>0,\\
\FwdInv(z,T) = 0,& z\notin\gM,T>0,\\
\FwdInv(z,0) = \mathbb{1}_{\gM}(z),&z\in\R^{n+1}.
\end{cases}
\end{align}
To obtain  \eqref{eq:CauchyProblem}, define \(D = \zeta\zeta^\intercal\) and apply the vector identity\footnote{The identity is a direct consequence of the Leibniz rule \(\partial_i(D_{ij}\partial_j F) = (\partial_iD_{ij})\partial_j F + D_{ij}\partial_i\partial_j F.\)}
\begin{align}
\label{eq:VectorIdentity}
    \nabla\cdot (D\grad\FwdInv) = \gL_{\nabla\cdot D}\FwdInv + \tr(D\Hess\FwdInv).
\end{align}
% From Lemma~\ref{lem:Lemma1}, 
% \begin{align}
%     \mP_x( z_t \in \gM , \forall t \in [ 0, T] ) = F(x,T) 
% \end{align}
% Here, $F(x,T)$ satisfies the convection diffusion equation \begin{align}
% \label{eq:lem1_pde}
% \begin{cases}
% {\partial F\over\partial t} = {1\over 2}\operatorname{tr}(\sigma^\intercal\sigma\operatorname{Hess}F) + \mu\cdot \nabla v,& (z,t)\in \gM \times (0,T] \\
% \FwdInv(z,t) = 0,& (z,t)\in \gM^c \times (0,T]\\
% \FwdInv(z,0) = 1,& z\in \gM \\
% \FwdInv(z,0) = 0,& z\in \gM^c \\
% \FwdInv(z,t)\to 0, & \|z\|\to\infty,
% \end{cases}
% \end{align}
% which yields \eqref{eq:CauchyProblem}.\tocite{}. 
% \todoalbert{is the last infinity condition appropriate? should we all the infinity condition to the lemmas too?}
\end{proof}
\begin{proof}[Proof (Theorem~\ref{thm:ConvergenceProbability_MainTheorem3})]
Consider the augmented space of $Z_t = [ \phi(X_t), X_t^\intercal]^\intercal \in \R^{n+1}$ in~\eqref{eq:augumented_z} in~\eqref{eq:augumented_z}. The stochastic process $Z_t$ is a solution of~\eqref{eq:MainTheorem_z} with parameters $\ave$ and $\var$ defined in~\eqref{eq:mu_sigma_prime} with the initial state  
\begin{align}
\label{eq:thm3-initial}
    Z_0 = z  = [ \phi(x), x^\intercal ]^\intercal.
\end{align}
Let us define 
\begin{align}
\label{eq:set_thm3} 
    \gM = \{ z \in \R^{n+1} : z[1] < \ell \} .
\end{align}
The CDF of $\maxf_x(T)$ is given by
\begin{align}
    \mP(\maxf_x(T) <\ell ) &= \mP_x(\forall t\in[0,T], \phi(X_t)<\ell)\\
    &=\mP_x(\forall t\in[0,T],Z_t\in\gM),
\end{align}
which, by Lemma~\ref{lem:Lemma1}, is the solution to the convection-diffusion equation \eqref{eq:Lemma1CDE}, yielding \eqref{eq:pde_thm3} after the application of identity \eqref{eq:VectorIdentity}.
% \albert{the following is the older text.  Delete it if the new text is fine.}
% {
% \color{lightgray}
% The CCDF of $\maxf_x(T)$ equals
% \begin{align}
%     \mP( \maxf_x(T) \geq \ell ) &=  \mP_x (  \exists t \in [0,T] \text{ s.t. } \phi(X_t) \geq \ell  ) \\
%     &= 1 - \mP_x ( \forall t \in [0,T], \phi(X_t) < \ell)\\
%     &= 1 - \mP_x ( \forall t \in [0,T], Z_t \in \gM ).
% \end{align}
% Let $\lemmaF(z,T)$ be the solution of the convection-diffusion equation \eqref{eq:lem1_pde}, where $z$ and $\gM$ is given by \eqref{eq:thm3-initial} and \eqref{eq:set_thm3}, respectively. From Lemma~\ref{lem:Lemma1}, 
% \begin{align}
% \label{eq:thm3-condition1}
%     \mP_x( \forall t \in [0,T], z_t \in \gM  ) = \lemmaF(z,T), 
% \end{align}
% which yields \eqref{eq:pde_thm3}. }
%Set $a=-\infty$ and $b=0$ in Lemma~\ref{lem:Lemma1} and we get that $F(x,T)$ is the probability that the stochastic process $\phi(X_t) \leq 0$ when $X_t$ begins with $X_0=x$. In other words, the stochastic process $X_t$ always stays in the unsafe region $\bar{\gC}$ during the time interval $t\in[0,T]$. The complement of such event is that $\{\sup_{t\in[0,T]}\phi(X_t) > 0\}$ with probability $1 - F(x,t)$, \ie the probability that the stochastic process $X_t$ returns back to the safe region $\gC$ is $1 - F(x,t)$.
\end{proof}

\subsection{Proof of Theorem~\ref{thm:InvariantProbability_MainTheorem2} and Theorem~\ref{thm:ConvergenceProbability_MainTheorem4}}
\label{sec:Proof_theorem2_4}
The proof of Theorem~\ref{thm:InvariantProbability_MainTheorem2} and Theorem~\ref{thm:ConvergenceProbability_MainTheorem4} requires Lemma~\ref{lem:EscapeTime} and Lemma~\ref{lem:EscapeTimeLemma3}. We will first present
these lemmas 
%Lemma~\ref{lem:EscapeTime} 
and then prove Theorem~\ref{thm:InvariantProbability_MainTheorem2} and Theorem~\ref{thm:ConvergenceProbability_MainTheorem4}.

% \begin{lemma}\label{lem:EscapeTime}
% Consider the diffusion process \eqref{eq:Wiener_drift_diffusion} with the initial point \(X_0=x\). Define the escape time as~\eqref{eq:EscapeTime}. 
% Let \(\psi(x), V(x)\) be continuous functions and \(V\) be non-negative. Let $\lemmaF: \R^{n}\times \R_+ \to \R$ be the solution to
% \begin{align}\label{eq:Lemma3_boundary_condition}
%     \begin{cases}
%     \begin{aligned}[b]
%     \textstyle
%     {\partial \lemmaF \over \partial t}(x,t) &= \textstyle{1\over 2}\tr(\sigma\sigma^\intercal  \Hess{\lemmaF}) + \gL_\mu\lemmaF \\ &\quad - V\lemmaF,
%     \end{aligned}
%     & x\in\gM, t>0,\\
%     \lemmaF(x,t) = \psi(x),& x\notin\gM, t>0,\\
%     \lemmaF(x,0) = \psi(x), & x\in\R^n.\\
%     \end{cases}
% \end{align}
% Then, the following condition holds
% \begin{align}
%     \lemmaF(x,t) = \mE_x\left[\psi(X_{t\wedge \lemmaT_{\gM}})e^{-\int_0^{t\wedge \lemmaT_{\gM}}V(X_{s})\, ds}\right].
% \end{align}
% \end{lemma}

% While~\cite[Theorem~11.1,p.~172]{borodin_stochastic_2017} gives a one dimensional case of Lemma~\ref{lem:EscapeTime} on a constant interval, we extend~\cite[Theorem~11.1,p.~172]{borodin_stochastic_2017} to a multidimensional setting with a general set. For the space constraint, we omit the proof of Lemma~\ref{lem:EscapeTime}. The formal proof can be found in the extended version of this paper in arxiv. 

% Taking the steady state asymptotic, i.e.\@ $t\to \infty$ and $\lim_{t\to\infty}{\partial \lemmaF\over \partial t}=0$, of Lemma~\ref{lem:EscapeTime}, we obtain the following corollary.

\begin{lemma}\label{lem:EscapeTime}
Consider the diffusion process \eqref{eq:Wiener_drift_diffusion} with the initial point \(X_0=x\). Define the escape time as~\eqref{eq:EscapeTime}. 
Let \(\psi(x), V(x)\) be continuous functions and \(V\) be non-negative. If $\lemmaF: \R^{n} \to \R$ is the bounded solution to the boundary value problem
\begin{align}\label{eq:Col_boundary_condition}
    \begin{cases}
    {1\over 2}\tr(\sigma\sigma^\intercal \Hess{\lemmaF}) + \gL_\mu\lemmaF - V\lemmaF =0, & x\in\gM,\\
    \lemmaF(x) = \psi(x),& x\notin \gM,\\
    \end{cases}
\end{align}
then
%\footnote{In \eqref{eq:TimeIndependentSolution}, the expression in \(\mE_x[\,\cdot\,]\) is evaluated as \(0\) if \(H_{\gM}=\infty\).}
\begin{align}
\label{eq:TimeIndependentSolution}
    \lemmaF(x) = \mE_x\left[\psi(X_{\lemmaT_{\gM}})e^{-\int_0^{\lemmaT_{\gM}}V(X_{s})\, ds}\right].
\end{align}
\end{lemma}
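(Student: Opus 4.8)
The plan is to apply It\^o's formula to the discounted process $Y_t:=\lemmaF(X_t)E_t$, where $E_t:=\exp\!\bigl(-\int_0^tV(X_s)\,ds\bigr)$, to exploit that the elliptic equation in~\eqref{eq:Col_boundary_condition} forces the drift of $Y$ to vanish inside $\gM$, conclude that $Y$ stopped at the escape time $\lemmaT_\gM$ from~\eqref{eq:EscapeTime} is a martingale, and then take expectations and let time go to infinity to read off~\eqref{eq:TimeIndependentSolution}.

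First I would use that, being the bounded classical ($C^2$) solution of a second-order elliptic PDE, $\lemmaF$ is twice continuously differentiable in the open set $\gM$ and continuous up to $\partial\gM$, so It\^o's lemma applies to $\lemmaF(X_t)$ as long as $X_t\in\gM$. Expanding $d(\lemmaF(X_t)E_t)$ by the product and It\^o rules (note that $E_t$ has bounded variation, so there is no cross-variation term), the $dt$-part equals $E_t\bigl(\tfrac12\tr(\sigma\sigma^\intercal\Hess{\lemmaF})+\gL_\mu\lemmaF-V\lemmaF\bigr)(X_t)\,dt$, which is identically $0$ for $X_t\in\gM$ by the first line of~\eqref{eq:Col_boundary_condition}, leaving the local-martingale part $E_t\,\grad{\lemmaF}(X_t)\cdot\sigma(X_t)\,dW_t$. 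Introducing a localizing sequence $\tau_k:=\inf\{t\ge0:X_t\notin\gM_k\}$, where $\{\gM_k\}$ is an increasing family of bounded open sets with $\overline{\gM_k}\subset\gM$ and $\bigcup_k\gM_k=\gM$ (so that $\tau_k\uparrow\lemmaT_\gM$ by path-continuity of $X$), the path stays on $[0,\tau_k]$ inside the compact set $\overline{\gM_k}\subset\gM$, on which $\grad{\lemmaF}$ and $\sigma$ are bounded while $E_t\le1$; hence $t\mapsto Y_{t\wedge\tau_k}$ is a genuine martingale and $\lemmaF(x)=\mE_x[\lemmaF(X_{t\wedge\tau_k})E_{t\wedge\tau_k}]$.

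Next I would pass to the limit. Boundedness of $\lemmaF$ together with $V\ge0$ (so $0\le E_t\le1$) gives $|\lemmaF(X_{t\wedge\tau_k})E_{t\wedge\tau_k}|\le\norm{\lemmaF}_\infty$, so bounded convergence lets me send $k\to\infty$ at fixed $t$: using continuity of $X$, continuity of $\lemmaF$ up to $\partial\gM$, and the boundary condition $\lemmaF=\psi$ on $\gM^c$, the integrand converges to $\lemmaF(X_t)E_t$ on $\{\lemmaT_\gM>t\}$ and to $\psi(X_{\lemmaT_\gM})E_{\lemmaT_\gM}$ on $\{\lemmaT_\gM\le t\}$, so $\lemmaF(x)=\mE_x\bigl[\lemmaF(X_t)E_t\mathbb{1}_{\{\lemmaT_\gM>t\}}+\psi(X_{\lemmaT_\gM})E_{\lemmaT_\gM}\mathbb{1}_{\{\lemmaT_\gM\le t\}}\bigr]$. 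Sending $t\to\infty$ and using $\lemmaT_\gM<\infty$ $\mP_x$-a.s. (together with $V\ge0$, boundedness of $\lemmaF$, and continuity of $\psi$ and $V$), the first term drops out and the second converges to $\psi(X_{\lemmaT_\gM})E_{\lemmaT_\gM}$, which is exactly~\eqref{eq:TimeIndependentSolution}.

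The It\^o computation is routine; the real work lies in the limiting argument, and I expect two points to be the main obstacles. First, the localization: one must verify that stopping the local martingale at $\tau_k$ turns it into a true martingale, which is precisely why the exhaustion $\overline{\gM_k}\subset\gM$ and the boundedness of $\lemmaF,\grad{\lemmaF},\sigma$ on compact subsets of $\gM$ are needed. Second, recovering the boundary data in the $t\to\infty$ limit: this relies on $\mP_x(\lemmaT_\gM<\infty)=1$ and on $\lemmaF$ extending continuously to $\partial\gM$ and matching $\psi$ there; without a.s.\ finiteness of $\lemmaT_\gM$ one only obtains the identity up to the contribution of the event $\{\lemmaT_\gM=\infty\}$, on which $E_t\lemmaF(X_t)$ need not vanish, and this is the delicate hypothesis to pin down in a fully rigorous write-up.
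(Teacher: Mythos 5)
Your proof follows essentially the same route as the paper's: apply It\^o's formula to the discounted process \(\lemmaF(X_t)e^{-\int_0^t V(X_s)\,ds}\), use the PDE to annihilate the drift inside \(\gM\), and evaluate the resulting martingale identity at the escape time to recover the boundary data. The only difference is that you carry out the localization and the \(t\to\infty\) limit explicitly and correctly flag the need for \(\mP_x(\lemmaT_{\gM}<\infty)=1\), steps the paper's write-up passes over silently.
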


%The proof of Lemma~\ref{lem:EscapeTimeLemma3} is based on Lemma~\ref{lem:EscapeTime}.

\begin{proof}
%\begin{proof}[Proof (Lemma~\ref{lem:EscapeTime})]
We first define a mapping $\eta: \R \rightarrow \R$ by 
 \begin{equation}\label{eq:67}
     \eta(s) := \lemmaF(X_s)e^{-\int_0^s V(X_v)\,dv}.
 \end{equation}
It satisfies
\begin{align}
\label{eq:noise_eta0}
    \eta(q)  &= \int_0^q d\eta(s) + \eta(0)\\
    \nonumber
    &=\int_0^q e^{-\int_0^s V(X_v)\, dv}\bigl[
    -V(X_s)\lemmaF(X_s) + \gL_\mu \lemmaF(X_s)\\
    \label{eq:noise_eta1}
    &\quad+\textstyle{1\over 2}\tr \left(\sigma(X_s)\sigma^\intercal(X_s)\Hess \lemmaF(X_s)\right)
    \bigr]ds \\
    \nonumber
    &\quad + \int_0^q e^{-\int_0^s V(X_v)\, dv}\gL_\sigma U(X_s)\, dW_s + \eta(0) \\
    &=\int_0^q
    e^{-\int_0^s V(X_v)\, dv}\gL_\sigma U(X_s)\, dW_s + \eta(0) ,
    %\tag{\theequation}
    \label{eq:noise_eta}
\end{align}
where \eqref{eq:noise_eta1} is from It\^o's Lemma; \eqref{eq:noise_eta} is from \eqref{eq:Col_boundary_condition} with $x \in \gM$. Thus, its expectation satisfies
\begin{align}
 \label{eq:eta_F1}
    \mE_x[\eta(q)] & = \mE_x[\eta(0)] \\
    \label{eq:eta_F2}
    &=  \mE_x[\lemmaF(X_0)] \\
    &= \lemmaF(x).
     %\overset{\eqref{eq:67}}{=}  \mE_x[\lemmaF(X_0)]= \lemmaF(x).
     \label{eq:eta_F}
\end{align}
where \eqref{eq:eta_F1} holds because the right hand side of \eqref{eq:noise_eta} has zero mean; \eqref{eq:eta_F2} is due to \eqref{eq:67}; and \eqref{eq:eta_F} is from the assumption that $X_0 = x$. 

Next, we set
\begin{align}
\label{eq:q-value}
   q = H_{\gM} .
\end{align}
As \eqref{eq:q-value} implies \(X_{q}\notin\gM.\), condition \eqref{eq:Col_boundary_condition} with $x \notin \gM$ yields  
\begin{align}
\label{eq:UGetsBdyValue1}
    \lemmaF(X_q) = \psi(X_q). 
\end{align}
Finally, we have 
\begin{align}
\label{eq:thm4_ucondition1}
    \lemmaF(x)&= \mE_x[\eta(q)]\\
    \label{eq:thm4_ucondition2}
    &=\mE_x\left[\lemmaF(X_q)e^{-\int_0^q V(X_v)\,dv} \right]\\
    \label{eq:thm4_ucondition3}
    &= \mE_x\left[\psi(X_q)e^{-\int_0^q V(X_v)\,dv }\right]\\
    \label{eq:thm4_ucondition4}
     &= \mE_x\left[\psi(X_{H_{\gM}})e^{-\int_0^{H_{\gM}} V(X_v)\,dv }\right],
\end{align}
where \eqref{eq:thm4_ucondition1} is due to \eqref{eq:eta_F}; \eqref{eq:thm4_ucondition2} is due to \eqref{eq:67}; \eqref{eq:thm4_ucondition3} is due to \eqref{eq:UGetsBdyValue1}, and \eqref{eq:thm4_ucondition4} is due to \eqref{eq:q-value}. 

\end{proof}

\begin{lemma}\label{lem:EscapeTimeLemma3}
Consider the diffusion process \eqref{eq:Wiener_drift_diffusion} with the initial condition \(X_0=x\in\R^n\).
Define the escape time,
\begin{align}\label{eq:EscapeTime}
    \lemmaT_{\gM}:=\inf\{t\in \R_+: X_t\notin\gM\}.
\end{align}
The CDF of the escape time
\begin{align}
    \lemmaF(x,t) = \mP_x(\lemmaT_\gM\leq t),\quad t>0,
\end{align}
is the solution to
\begin{align}\label{eq:EscapeTimeLemma3_boundary_condition}
    \begin{cases}
    {\partial \lemmaF \over \partial t}(x,t) = {1\over 2}\tr(\sigma\sigma^\intercal  \Hess{\lemmaF}) + \gL_\mu\lemmaF, & x\in\gM,t>0,\\
    \lemmaF(x,t) = 1, & x\notin\gM,t>0,\\
    \lemmaF(x,0) = \mathbb{1}_{\gM^c}(x), & x\in\R^n.\\
    \end{cases}
\end{align}
\end{lemma}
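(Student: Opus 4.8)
The plan is to derive Lemma~\ref{lem:EscapeTimeLemma3} from Lemma~\ref{lem:EscapeTime} by a suitable choice of the auxiliary functions, together with a limiting argument in a killing rate, exactly paralleling how Lemma~\ref{lem:Lemma1} was obtained from the Feynman--Kac representation. The key observation is that the event $\{\lemmaT_\gM \le t\}$ can be re-encoded through an exponential functional: if we introduce a constant killing rate $\gamma$ on the complement $\gM^c$ and let $\gamma\to\infty$, the discount factor $e^{-\gamma\int_0^s \mathbb{1}_{\gM^c}(X_v)\,dv}$ collapses to the indicator of never having left $\gM$, which is the complement of the escape event.

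First I would reduce the time-dependent PDE~\eqref{eq:EscapeTimeLemma3_boundary_condition} to a time-independent one for a fixed horizon. Fix $t>0$ and set $T=t$. Observe that $\lemmaF(x,t) = \mP_x(\lemmaT_\gM \le t) = 1 - \mP_x(X_s \in \gM,\ \forall s\in[0,t])$. By Lemma~\ref{lem:Lemma1}, the quantity $\lemmaF_0(x,t):=\mP_x(X_s\in\gM,\forall s\in[0,t])$ solves the convection-diffusion equation~\eqref{eq:Lemma1CDE} with $\gM$ as given, i.e. $\partial_t \lemmaF_0 = \frac12\tr(\sigma\sigma^\intercal\Hess\lemmaF_0) + \gL_\mu\lemmaF_0$ on $\gM$, with $\lemmaF_0 = 0$ off $\gM$ and $\lemmaF_0(x,0) = \mathbb{1}_{\gM}(x)$. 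Setting $\lemmaF = 1 - \lemmaF_0$, the linear operator $\frac12\tr(\sigma\sigma^\intercal\Hess{\cdot}) + \gL_\mu$ annihilates constants (it has no zeroth-order term), so $\partial_t \lemmaF = \frac12\tr(\sigma\sigma^\intercal\Hess\lemmaF) + \gL_\mu\lemmaF$ on $\gM$; the boundary condition becomes $\lemmaF = 1$ off $\gM$; and the initial condition becomes $\lemmaF(x,0) = 1 - \mathbb{1}_\gM(x) = \mathbb{1}_{\gM^c}(x)$. These are precisely the three cases in~\eqref{eq:EscapeTimeLemma3_boundary_condition}.

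The one subtlety is whether the probabilistic identity $\lemmaF(x,t) = 1 - \lemmaF_0(x,t)$ is exactly $\mP_x(\lemmaT_\gM\le t)$, as opposed to $\mP_x(\lemmaT_\gM < t)$ or some version with the closure of $\gM$; this is where I would invoke the same convention-handling remarked upon in footnote~\ref{ft:inequality}, namely that $\lemmaT_\gM$ has a density (under the standing regularity assumptions on~\eqref{eq:Wiener_drift_diffusion}) so the distinction between $\le$ and $<$ does not affect the value, and that $\{X_s\in\gM,\forall s\in[0,t]\}$ and $\{\lemmaT_\gM > t\}$ agree up to a null event. If one prefers a self-contained route bypassing Lemma~\ref{lem:Lemma1}, I would instead apply Lemma~\ref{lem:EscapeTime} directly with $\psi \equiv 1$ and $V \equiv 0$ to get the stationary (no time dependence) version, then recover the time-dependent statement by appending a time coordinate: work with the space-time process $\tilde X_s = (X_s, t-s)$ on $\R^{n+1}$ and the domain $\tilde\gM = \gM\times(0,\infty)$, for which the first exit time of $\tilde X$ from $\tilde\gM$ is $\lemmaT_\gM \wedge t$, and the generator of $\tilde X$ is $\frac12\tr(\sigma\sigma^\intercal\Hess{\cdot}) + \gL_\mu - \partial_t$; Lemma~\ref{lem:EscapeTime} applied to the space-time problem with boundary value $1$ on $\tilde\gM^c$ and $V\equiv 0$ then yields exactly~\eqref{eq:EscapeTimeLemma3_boundary_condition}.

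The main obstacle is not any single hard estimate but rather the careful bookkeeping of the boundary/initial conditions and the measure-zero identifications: ensuring that the ``$=1$'' Dirichlet datum off $\gM$, the initial datum $\mathbb{1}_{\gM^c}$, and the stopped-process boundary behavior are all mutually consistent, and that the limit $\gamma\to\infty$ (if one goes the Feynman--Kac route rather than quoting Lemma~\ref{lem:Lemma1}) is justified by dominated convergence with the bounded integrand $e^{-\gamma\int_0^s\mathbb{1}_{\gM^c}(X_v)\,dv}\le 1$. Given that Lemma~\ref{lem:Lemma1} is already available, the cleanest presentation is the complementation argument in the second paragraph, and I expect the proof to be only a few lines.
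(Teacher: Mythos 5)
Your proof is correct, but it takes a genuinely different route from the paper. The paper derives Lemma~\ref{lem:EscapeTimeLemma3} from the \emph{time-independent} boundary-value problem of Lemma~\ref{lem:EscapeTime}: it chooses $\hat\psi = 1/\gamma$, $\hat V\equiv 1$ so that the stationary solution becomes $\mE_x[\tfrac{1}{\gamma}e^{-\gamma H_\gM}]$, recognizes this (after integration by parts) as the Laplace transform in $t$ of the CDF $\mP_x(H_\gM\le t)$, and then inverts the Laplace transform of the elliptic PDE $\tfrac12\tr(\sigma\sigma^\intercal\Hess\hat\lemmaF)+\gL_\mu\hat\lemmaF-\gamma\hat\lemmaF=0$ to recover the parabolic problem~\eqref{eq:EscapeTimeLemma3_boundary_condition}, with the $-\gamma\hat\lemmaF$ term becoming $\partial_t\lemmaF$ and the boundary datum $1/\gamma$ becoming the constant $1$. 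Your main argument instead complements Lemma~\ref{lem:Lemma1}: since $\{ \lemmaT_\gM\le t\}$ is (for $\gM$ open and continuous paths) exactly the complement of $\{X_s\in\gM,\ \forall s\in[0,t]\}$, and the operator $\tfrac12\tr(\sigma\sigma^\intercal\Hess\,\cdot)+\gL_\mu$ annihilates constants, setting $\lemmaF=1-\lemmaF_0$ transports the PDE, boundary condition, and initial condition of~\eqref{eq:Lemma1CDE} directly onto~\eqref{eq:EscapeTimeLemma3_boundary_condition}. This is shorter and is in fact already implicit in the paper's own identity~\eqref{eq:safe_prob_invariant}; one small improvement over your write-up is that you do not need the density/null-set hedge for the complementation step --- because $\gM$ is a domain (open) and $\gM^c$ is closed, path continuity gives $\{\lemmaT_\gM\le t\}=\{\exists s\in[0,t]:X_s\notin\gM\}$ exactly. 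What the paper's Laplace-transform route buys is that it exercises Lemma~\ref{lem:EscapeTime} (advertised as a standalone contribution) and generalizes to weighted exit functionals $\mE_x[\psi(X_{H_\gM})e^{-\int_0^{H_\gM}V}]$ beyond the bare CDF; what your route buys is brevity and independence from the inverse-Laplace step, whose rigorous justification (existence of the inverse transform, interchange with the spatial operator) the paper leaves implicit. Your second, space-time-process alternative is also viable but is the least developed of the three and is not needed given Lemma~\ref{lem:Lemma1}.
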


\begin{proof}%[Proof (Lemma~\ref{lem:EscapeTimeLemma3})]
%\begin{proof}[Proof (Lemma~\ref{lem:EscapeTimeLemma3})]
% Define the diffusion process~\eqref{eq:x_trajectory}, the escape time \(H_{\gM}\) as~\eqref{eq:EscapeTime}. The probability density function of \(H_{\gM}\) is
% \begin{equation}\label{eq:D4_P1}
%     \mP_x(H_{\gM}) = \mP_x\left(\int_0^{H_{\gM}} h(X_s)\,ds\right),
% \end{equation}
% where $h(x) = 1$ is a constant function. Set $Y = \int_0^{H_{\gM}} h(X_s)\,ds$ as a random variable with probability density function $\mP_x(y)$. Do the Laplace transform and we get
% \begin{align}
%     R(\gamma) & = \int^\infty_0 \mP_x\left(y\right)\exp(-\gamma y)\,dy\\
%     & = \mE_x\left[ \exp\left(-\gamma  y\right)\right]\\
%     & = \mE_x\left[\exp\left(-\gamma\int_0^{H_{\gM}} h(X_s)\,ds\right) \right].
% \end{align}
% Thus 
% \begin{align}
%     \mP_x(H_{\gM}) = \gL^{-1}(R(\gamma)),
% \end{align}
% where $\gL^{-1}$ is the inverse Laplace transform operator.
% Set \(\psi=1\) and \(\gamma\bar{V} = V\) in Corollary~\ref{col:1} and have 
% \begin{align}
%     \begin{cases}
%     {1\over 2}\tr(\sigma^\intercal \sigma \Hess{\lemmaF}) + \gL_\mu\lemmaF - \gamma\bar{V}\lemmaF =0, & x\in\gM,\\
%     \lemmaF(x) = 1,& x\in\partial \gM,\\
%     \end{cases}
% \end{align}
% and 
% \begin{align}
%     \lemmaF(x) = \mE_x\left[\psi(X_{\lemmaT_{\gM}})\exp\left(-\int_0^{\lemmaT_{\gM}}V(X_{s})\, ds\right)\right].
% \end{align}
Let \(\gamma\in\Bbb C\) be a spectral parameter with \(\operatorname{Re}(\gamma)> 0\). For each fixed \(\gamma\), let $\hat{\lemmaF}(\cdot,\gamma): \R^n \to \Bbb C$ be the solution of
\begin{align}\label{eq:InvariantProbability_FeynmaKac_PDE_complex}
\begin{cases}
         \frac{1}{2}\tr(\sigma\sigma^\intercal\Hess{\hat{\lemmaF}}) + \gL_\mu\hat{\lemmaF} -\gamma \hat{V}\hat{\lemmaF}=0, & x\in\gM,\\
         \hat{\lemmaF}(x,\gamma) = \hat{\psi}(x,\gamma), & x\notin\gM .\\
\end{cases}
\end{align}
According to Lemma~\ref{lem:EscapeTime}, \(\hat{\lemmaF}(x,\gamma)\) is given by
\begin{align}\label{eq:78}
    \hat \lemmaF(x,\gamma) & = \mE_x\left[\hat{\psi}(X_{H_{\gM}})e^{-\gamma\int_0^{H_{\gM}}\hat{V}(X_s)\, ds}\right],
\end{align}
where \(X_s\) is the diffusion process in~\eqref{eq:Wiener_drift_diffusion}. Now, take
\begin{align}\label{eq:SpecialPsiV}
    \hat{\psi}(x,\gamma) = 1/\gamma,\quad \hat{V}(x) = 1,
\end{align}
in~\eqref{eq:InvariantProbability_FeynmaKac_PDE_complex} and~\eqref{eq:78}.
Then,~\eqref{eq:InvariantProbability_FeynmaKac_PDE_complex} becomes
\begin{align}
\label{eq:LaplaceDomainPDE}
\begin{cases}
         \frac{1}{2}\tr(\sigma\sigma^\intercal\Hess{\hat{\lemmaF}})+ \gL_\mu\hat{\lemmaF}-\gamma\hat{\lemmaF}=0, & x\in\gM,\\
         \hat{\lemmaF}(x,\gamma) = 1/ \gamma, & x\notin\gM,\\
\end{cases}
\end{align}
and \eqref{eq:78} becomes
\begin{align}
\nonumber
    %\label{eq:LaplaceCDF1}
    \hat \lemmaF(x, \gamma) & = \mE_x\left[\tfrac{1}{\gamma}e^{-\gamma H_{\gM}}\right]\\
    \nonumber
    %\label{eq:LaplaceCDF2}
    & = \int_0^\infty {1\over\gamma} e^{-\gamma t}p_{H_{\gM}\mid X_0}(t\mid x)\, dt\\
    \label{eq:LaplaceCDF3}
    % &=\int_0^\infty\exp(-\gamma t)\mP_x(H_{\gM}\leq t)\, dt
    &=\int_0^\infty e^{-\gamma t}\mP_x(H_{\gM}\leq t)\, dt
\end{align}
where integration by parts is used in \eqref{eq:LaplaceCDF3}.
Here, 
\(
    p_{H_{\gM}|X_0}(t|x) = {d \over dt}\mP_x(H_{\gM}\leq t)
\)
denotes the probability density function of \(H_\gM\) conditioned on \(X_0 = x\). % Here, we have used the integration by parts for the last equality of \eqref{eq:LaplaceCDF3}.

%where~\eqref{eq:LaplaceCDF1} is obtained by substitute~\eqref{eq:SpecialPsiV} into~\eqref{eq:78}; \eqref{eq:LaplaceCDF2} is from the definition of conditional expectation \(\mE_x\); \eqref{eq:LaplaceCDF3} is due to integration by parts and the relation between probability density function and cumulative distribution functions, \ie
%\begin{equation}
%    p_{H_{\gM}\mid X_0}(t\mid x) = {d \over dt}\mP_x(H_{\gM}\leq t).
%\end{equation}
Now, for fixed \(x\), let \(\lemmaF(x,t)\)  be the inverse Laplace transformation of \(\hat{\lemmaF}(x,\gamma)\). In other words,
\begin{align}\label{eq:Laplace}
    \hat \lemmaF(x,\gamma) & = \int_0^\infty \lemmaF(x,t)e^{-\gamma t}\, dt.
\end{align}
On one hand, comparing~\eqref{eq:LaplaceCDF3} and~\eqref{eq:Laplace}  gives
\begin{align}
    \lemmaF(x,t) = \mP_x\left(H_{\gM} \leq t\right).
\end{align}
On the other hand,
taking the inverse Laplace transformation of the PDE~\eqref{eq:LaplaceDomainPDE} for \(\hat \lemmaF\) yields
the PDE satisfied by \(\lemmaF\):
\begin{align}
\begin{cases}
         {\partial \lemmaF \over \partial t}(x,t)=\frac{1}{2}\tr(\sigma\sigma^\intercal\Hess{\lemmaF}) + \gL_\mu\lemmaF, & x\in\gM,t>0,\\
         \lemmaF(x,t) = 1, & x\notin\gM,t>0,\\
        \lemmaF(x,0) = \mathbb{1}_{\gM^c}(x), &  x\in\R^n.
\end{cases}
\end{align}
\end{proof}
% Now, let us consider a time-independent version of the convection-diffusion equation.  Specifically, \(\mu(x),\sigma(x), f(x)\) are only a function of \(x\in\Omega\), and \(\phi_\partial(x)\) is only a function of \(x\in\partial M\).  The following corollary is achieved by letting \(t\to \infty\).

% \begin{corollary}
% Let \(u\colon\Omega\to\Bbb R\) be the solution to the boundary-value problem of the steady convection-diffusion equation
% \begin{align}
%     \begin{cases}
%     {1\over 2}\operatorname{\tr}(\sigma^\intercal \sigma\operatorname{Hess} u) + \mu\cdot\nabla u - f u = 0,& x\in\Omega\\
%     u(x) = \phi_\partial(x),& x\in\partial\Omega.
%     \end{cases}
% \end{align}
% For each \(x\in\Omega\), consider a stochastic process
% \begin{align}
% \begin{cases}
% dX_s = \mu(X_s)\, ds + \sigma(X_s)\, dW_s\\
%     X_{s=0} = x.
% \end{cases}
% \end{align}
% If \(H_\partial<\infty\) almost surely, then
% \begin{align}
%     u(x) = \Bbb E\left[\phi_{\partial}(X_{H_\partial})\exp\left(-\int_0^{H_\partial}f(X_s)\, ds\right)\right].
% \end{align}
% Otherwise,
% \begin{align}
%     &U(x) = 
%     \begin{cases}
%     \phi_{\partial}(X_{H_\partial})\exp\left(-\int_0^{H_\partial}f(X_s)\, ds\right),& H_\partial < \infty\\
%     0, & H_\partial=\infty
%     \end{cases}\\
%     & u(x) = \Bbb E[U(x)].
% \end{align}
% In the latter case, include the following boundary condition to the PDE
% \begin{align}
%     u(x)\to 0,\quad |x|\to\infty.
% \end{align}
% \end{corollary}

Now, we are ready to prove Theorem~\ref{thm:InvariantProbability_MainTheorem2} and Theorem~\ref{thm:ConvergenceProbability_MainTheorem4}.
\begin{proof}[Proof (Theorem~\ref{thm:InvariantProbability_MainTheorem2})]
Let the augmented state \(Z_t\) be the solution of~\eqref{eq:MainTheorem_z} with the initial state 
%\begin{equation}\label{eq:definition_z}
    \(Z_0 = z = [\phi(x), x^\intercal]^\intercal\in\R^{n+1}.\)
%\end{equation}
Let %\begin{align}\label{eq:DefgM}
\begin{align}
    \gM = \{z\in\R^{n+1}: z[1]\geq \ell\}.
\end{align}
%\end{align}
Thus, we have \(\exit_x(\ell) = H_{\gM}\), for \(\FwdInvExit\) be the solution of the convection-diffusion equation with \(D = \zeta\zeta^\intercal\)
\begin{align*}\
\begin{cases}
    \frac{\partial \FwdInvExit}{\partial t} =  \frac{1}{2}\grad\cdot(D\grad{\FwdInvExit}) + \gL_{\rho-{1\over 2}\grad{ \cdot} D}\FwdInvExit
    ,& z\in\gM, t>0,\\
    \FwdInvExit(z,t) = 1,  &  z\notin\gM, t>0.\\
    \FwdInvExit(z,0) = \mathbb{1}_{\gM^c}(z),&z\in\R^{n+1}.
\end{cases}
\end{align*}
%where \(z\) and \(\gM\) are given by~\eqref{eq:definition_z} and~\eqref{eq:DefgM}. 
From Lemma~\ref{lem:EscapeTimeLemma3}, we have 
\(
   \FwdInvExit(x,t)  = \mP(\exit_{x}\leq t).
\)
% Consider the diffusion process~\eqref{eq:Wiener_drift_diffusion} with initial point \(X_0 = x\), the safe region and zero-CBF \(\phi\) in~\eqref{eq:safe_region}. Let $\gM = \gC = \{x: \phi(x)\geq 0\}$ and the minimum exit time is 
% \begin{align}
%     \exit_x & = \inf\{t: \phi(X_t)<0, X_0=x\in\gC\}\\
%     & = H_{\gM} .
% \end{align}
% The partial differential equation~\eqref{eq:EscapeTimeLemma3_boundary_condition} in Lemma~\ref{lem:EscapeTimeLemma3} becomes 
% \begin{align}\label{eq:86}
%     \begin{cases}
%     {\partial \FwdInvExit \over \partial t}(x,t) = {1\over 2}\tr(\sigma^\intercal \sigma \Hess{\FwdInvExit}) + \gL_\mu\FwdInvExit, & x\in\gC,\\
%   \FwdInvExit(x,t) = 1, & x\in\partial \gC,\\
%     \FwdInvExit(x,0) = 0, & x\in\gC\\
%     \FwdInvExit(x,t)\to 0, & x\to \infty.
%     \end{cases}
% \end{align}
% The solution \(\FwdInvExit\) of~\eqref{eq:86} is the cumulative distribution function of \(H_{\gM}\) and we have
\end{proof}

\begin{proof}[Proof (Theorem~\ref{thm:ConvergenceProbability_MainTheorem4})]
Let the augmented state space \(Z_t\) be the solution of~\eqref{eq:MainTheorem_z} with the initial state 
%\begin{equation}\label{eq:definition_z2}
    \(Z_0 = z = [\phi(x), x^\intercal]^\intercal\in\R^{n+1}.\)
%\end{equation}
Let 
%\begin{align}\label{eq:DefgM2}
\begin{equation}
    \gM = \{z\in\R^{n+1}: z[1] < \ell\}.
\end{equation}
%\end{align} 
We have \(\entrance_x(\ell) = H_{\gM}\) for \(\FwdConvExit\) be the solution of the convection diffusion equation
\begin{align*}
\begin{cases}
    \frac{\partial \FwdConvExit}{\partial t}(z,t) =  \frac{1}{2}\grad\cdot(D\grad{\FwdConvExit}) + \gL_{\ave-{1\over 2} \grad{\cdot D}}{\FwdConvExit}
    ,& z\in\gM,t>0,\\
    \FwdConvExit(x,t) = 1,  & z\notin\gM,t>0,\\
    \FwdConvExit(x,0) = \mathbb{1}_{\gM^c}(z),&z\in\R^{n+1},
\end{cases}
\end{align*}
where \(D = \zeta\zeta^\intercal\).
%where \(z\) and \(\gM\) are given by~\eqref{eq:definition_z2} and~\eqref{eq:DefgM2}. 
From Lemma~\ref{lem:EscapeTimeLemma3}, we have 
\begin{align}
   \FwdConvExit(x,t)  = \mP(\entrance_{x}\leq t).
\end{align}
% Consider the diffusion process~\eqref{eq:Wiener_drift_diffusion} with initial point \(X_0 = x\), the safe region and zero-CBF \(\phi\) in~\eqref{eq:safe_region}. Let $\gM = \gC^c = \{x: \phi(x) < 0\}$ and the minimum re-enter time is 
% \begin{align}
%   \entrance_x  & = \inf\{t: \phi(X_t)\geq 0, X_0=x\notin\gC\}\\
%     & = H_{\gM}.
% \end{align}
% The partial differential equation~\eqref{eq:EscapeTimeLemma3_boundary_condition} in Lemma~\ref{lem:EscapeTimeLemma3} becomes 
% \begin{align}\label{eq:90}
%     \begin{cases}
%     {\partial \FwdConvExit \over \partial t}(x,t) = {1\over 2}\tr(\sigma^\intercal \sigma \Hess{\FwdConvExit}) + \gL_\mu\FwdConvExit, & x\in\gC,\\
%   \FwdConvExit(x,t) = 1, & x\in\partial \gC,\\
%     \FwdConvExit(x,0) = 0, & x\in\gC\\
%     \FwdConvExit(x,t)\to 0, & x\to \infty.
%     \end{cases}
% \end{align}
% The solution \(\FwdConvExit\) of~\eqref{eq:90} is the cumulative distribution function of \(H_{\gM}\) and we have
% \begin{align}
%   \FwdConvExit(x,t) & = \mP_x(H_{\gM}\leq t)\\
%   & = \mP(\entrance_{x}\leq t).
% \end{align}
\end{proof}
\section{Conclusion}
This paper gives the exact probability distributions of the minimum and maximum barrier function values during any time interval and the first entry and exit times to and from any super level sets of the barrier function. The distributions of these variables can be used to study many safety criteria, including but not limited to the probability of safety and recovery, the safety margin, and the mean and tail distributions of the failure and recovery times. These results lay out the foundation for formulating new optimization problems with probabilistic bounds on safety and recovery and for solving these problems using existing techniques from PDE-constrained optimization. Ultimately, these tools can guide the design of a variety of safety-critical autonomous systems, including autonomous vehicles and robots~\cite{koopman2017autonomous,tadele2014safety,moustris2011evolution}. 

%\bibliographystyle{ieeetr}
% \printbibliography
\bibliography{citation}

% Generated by IEEEtran.bst, version: 1.14 (2015/08/26)
\begin{thebibliography}{10}
\providecommand{\url}[1]{#1}
\csname url@samestyle\endcsname
\providecommand{\newblock}{\relax}
\providecommand{\bibinfo}[2]{#2}
\providecommand{\BIBentrySTDinterwordspacing}{\spaceskip=0pt\relax}
\providecommand{\BIBentryALTinterwordstretchfactor}{4}
\providecommand{\BIBentryALTinterwordspacing}{\spaceskip=\fontdimen2\font plus
\BIBentryALTinterwordstretchfactor\fontdimen3\font minus
  \fontdimen4\font\relax}
\providecommand{\BIBforeignlanguage}[2]{{%
\expandafter\ifx\csname l@#1\endcsname\relax
\typeout{** WARNING: IEEEtran.bst: No hyphenation pattern has been}%
\typeout{** loaded for the language `#1'. Using the pattern for}%
\typeout{** the default language instead.}%
\else
\language=\csname l@#1\endcsname
\fi
#2}}
\providecommand{\BIBdecl}{\relax}
\BIBdecl

\bibitem{ames2019control}
A.~D. Ames, S.~Coogan, M.~Egerstedt, G.~Notomista, K.~Sreenath, and P.~Tabuada,
  ``Control barrier functions: Theory and applications,'' in \emph{2019 18th
  European Control Conference (ECC)}.\hskip 1em plus 0.5em minus 0.4em\relax
  IEEE, 2019, pp. 3420--3431.

\bibitem{clark2019control}
A.~Clark, ``Control barrier functions for complete and incomplete information
  stochastic systems,'' in \emph{2019 American Control Conference (ACC)}.\hskip
  1em plus 0.5em minus 0.4em\relax IEEE, 2019, pp. 2928--2935.

\bibitem{khatib1986real}
O.~Khatib, ``Real-time obstacle avoidance for manipulators and mobile robots,''
  in \emph{Autonomous robot vehicles}.\hskip 1em plus 0.5em minus 0.4em\relax
  Springer, 1986, pp. 396--404.

\bibitem{gracia2013reactive}
L.~Gracia, F.~Garelli, and A.~Sala, ``Reactive sliding-mode algorithm for
  collision avoidance in robotic systems,'' \emph{IEEE Transactions on Control
  Systems Technology}, vol.~21, no.~6, pp. 2391--2399, 2013.

\bibitem{prajna2007framework}
S.~Prajna, A.~Jadbabaie, and G.~J. Pappas, ``A framework for worst-case and
  stochastic safety verification using barrier certificates,'' \emph{IEEE
  Transactions on Automatic Control}, vol.~52, no.~8, pp. 1415--1428, 2007.

\bibitem{yaghoubi2020risk}
S.~Yaghoubi, K.~Majd, G.~Fainekos, T.~Yamaguchi, D.~Prokhorov, and B.~Hoxha,
  ``Risk-bounded control using stochastic barrier functions,'' \emph{IEEE
  Control Systems Letters}, 2020.

\bibitem{santoyo2021barrier}
C.~Santoyo, M.~Dutreix, and S.~Coogan, ``A barrier function approach to
  finite-time stochastic system verification and control,'' \emph{Automatica},
  p. 109439, 2021.

\bibitem{cheng2020safe}
R.~Cheng, M.~J. Khojasteh, A.~D. Ames, and J.~W. Burdick, ``Safe multi-agent
  interaction through robust control barrier functions with learned
  uncertainties,'' in \emph{2020 59th IEEE Conference on Decision and Control
  (CDC)}.\hskip 1em plus 0.5em minus 0.4em\relax IEEE, 2020, pp. 777--783.

\bibitem{pham2009continuous}
H.~Pham, \emph{Continuous-time stochastic control and optimization with
  financial applications}.\hskip 1em plus 0.5em minus 0.4em\relax Springer
  Science \& Business Media, 2009, vol.~61.

\bibitem{fan2019}
D.~D. Fan, J.~Nguyen, R.~Thakker, N.~Alatur, A.-a. Agha-mohammadi, and E.~A.
  Theodorou, ``Bayesian learning-based adaptive control for safety critical
  systems.''\hskip 1em plus 0.5em minus 0.4em\relax ICRA, 2019.

\bibitem{fisac2018general}
J.~F. Fisac, A.~K. Akametalu, M.~N. Zeilinger, S.~Kaynama, J.~Gillula, and
  C.~J. Tomlin, ``A general safety framework for learning-based control in
  uncertain robotic systems,'' \emph{IEEE Transactions on Automatic Control},
  vol.~64, no.~7, pp. 2737--2752, 2018.

\bibitem{zhou1996robust}
K.~Zhou, J.~C. Doyle, K.~Glover \emph{et~al.}, \emph{Robust and optimal
  control}.\hskip 1em plus 0.5em minus 0.4em\relax Prentice hall New Jersey,
  1996, vol.~40.

\bibitem{ferguson2008}
D.~{Ferguson}, M.~{Darms}, C.~{Urmson}, and S.~{Kolski}, ``Detection,
  prediction, and avoidance of dynamic obstacles in urban environments,'' in
  \emph{2008 IEEE Intelligent Vehicles Symposium}, 2008, pp. 1149--1154.

\bibitem{leon2019}
F.~Leon and M.~Gavrilescu, ``A review of tracking, prediction and decision
  making methods for autonomous driving,'' \emph{arXiv preprint
  arXiv:1909.07707}, 2019.

\bibitem{koopman2017autonomous}
P.~Koopman and M.~Wagner, ``Autonomous vehicle safety: An interdisciplinary
  challenge,'' \emph{IEEE Intelligent Transportation Systems Magazine}, vol.~9,
  no.~1, pp. 90--96, 2017.

\bibitem{tadele2014safety}
T.~S. Tadele, T.~de~Vries, and S.~Stramigioli, ``The safety of domestic
  robotics: A survey of various safety-related publications,'' \emph{IEEE
  robotics \& automation magazine}, vol.~21, no.~3, pp. 134--142, 2014.

\bibitem{moustris2011evolution}
G.~P. Moustris, S.~C. Hiridis, K.~M. Deliparaschos, and K.~M. Konstantinidis,
  ``Evolution of autonomous and semi-autonomous robotic surgical systems: a
  review of the literature,'' \emph{The international journal of medical
  robotics and computer assisted surgery}, vol.~7, no.~4, pp. 375--392, 2011.

\bibitem{oksendal_stochastic_2003a}
B.~{\O}ksendal, \emph{\BIBforeignlanguage{en}{Stochastic {{Differential
  Equations}}: {{An Introduction}} with {{Applications}}}}, 6th~ed., ser.
  Universitext.\hskip 1em plus 0.5em minus 0.4em\relax {Berlin Heidelberg}:
  {Springer-Verlag}, 2003.

\bibitem{borodin_stochastic_2017}
A.~N. Borodin, \emph{Stochastic processes}.\hskip 1em plus 0.5em minus
  0.4em\relax Springer, 2017.

\bibitem{dhiman2020control}
V.~Dhiman, M.~J. Khojasteh, M.~Franceschetti, and N.~Atanasov, ``Control
  barriers in bayesian learning of system dynamics,'' \emph{arXiv preprint
  arXiv:2012.14964}, 2020.

\bibitem{lefevre2014survey}
S.~Lef{\`e}vre, D.~Vasquez, and C.~Laugier, ``A survey on motion prediction and
  risk assessment for intelligent vehicles,'' \emph{ROBOMECH journal}, vol.~1,
  no.~1, pp. 1--14, 2014.

\bibitem{franklin2002feedback}
G.~F. Franklin, J.~D. Powell, A.~Emami-Naeini, and J.~D. Powell, \emph{Feedback
  control of dynamic systems}.\hskip 1em plus 0.5em minus 0.4em\relax Prentice
  hall Upper Saddle River, NJ, 2002, vol.~4.

\bibitem{khali1996adaptive}
H.~Khali, ``Adaptive output feedback control of nonlinear systems,'' \emph{IEEE
  Transactions on Automatic Control}, vol.~41, pp. 177--188, 1996.

\bibitem{agrawal2017discrete}
A.~Agrawal and K.~Sreenath, ``Discrete control barrier functions for
  safety-critical control of discrete systems with application to bipedal robot
  navigation.'' in \emph{Robotics: Science and Systems}, 2017.

\bibitem{kushner1967stochastic}
H.~J. Kushner, ``Stochastic stability and control,'' Brown Univ Providence RI,
  Tech. Rep., 1967.

\bibitem{karatzas2014brownian}
I.~Karatzas and S.~Shreve, \emph{Brownian motion and stochastic
  calculus}.\hskip 1em plus 0.5em minus 0.4em\relax springer, 2014, vol. 113.

\bibitem{ames_control_2019}
A.~D. Ames, S.~Coogan, M.~Egerstedt, G.~Notomista, K.~Sreenath, and P.~Tabuada,
  ``Control barrier functions: Theory and applications,'' in \emph{2019 18th
  European Control Conference (ECC)}.\hskip 1em plus 0.5em minus 0.4em\relax
  IEEE, 2019, pp. 3420--3431.

\bibitem{wei2019safe}
T.~Wei and C.~Liu, ``Safe control algorithms using energy functions: A uni ed
  framework, benchmark, and new directions,'' in \emph{2019 IEEE 58th
  Conference on Decision and Control (CDC)}.\hskip 1em plus 0.5em minus
  0.4em\relax IEEE, 2019, pp. 238--243.

\end{thebibliography}

\end{document}